\theoremstyle{definition}
\newtheorem{theorem}{Theorem}[section]
\newtheorem{proposition}[theorem]{Proposition}
\newtheorem{lemma}[theorem]{Lemma}
\newtheorem{corollary}[theorem]{Corollary}
\newtheorem{question}[theorem]{Question}
\newtheorem{definition}[theorem]{Definition}
\newtheorem{remark}[theorem]{Remark}
\numberwithin{equation}{section}
\numberwithin{equation}{section}
\begin{document}
  
\title{The K\"{a}hler-Ricci Soliton on Bounded Pseudoconvex Domains}
\author{Zehao Sha}
\address{Institut Fourier, UMR 5582, Laboratoire de Math\'ematiques, 
Universit\'e Grenoble Alpes, CS 40700, 38058 Grenoble cedex 9, France}
\email{zehao.sha@univ-grenoble-alpes.fr}
\begin{abstract}
In this paper, we study the K\"ahler-Ricci soliton on bounded pseudoconvex domains in $\mathbb{C}^n$ with $C^2$-boundary. Under certain assumptions, we prove that such solitons reduce to K\"ahler-Einstein metrics. Building on Huang and Xiao's resolution of Cheng’s conjecture, we further establish an analogous rigidity result for the Bergman K\"ahler–Ricci soliton. Several model domains are provided to illustrate our results.
\end{abstract}
\keywords{K\"ahler-Ricci soliton, K\"ahler-Einstein metric, Bergman metric, pseudoconvex domain. }
\maketitle

\section{Introduction}
 
\indent In K\"ahler geometry, a central problem is to find the ``best'' metric in a given K\"ahler class. The K\"ahler-Einstein metric is widely regarded as a canonical candidate for this problem. Despite this, not every K\"ahler manifold admits a K\"ahler-Einstein metric. To broaden this framework, K\"ahler-Ricci solitons generalize K\"ahler-Einstein metric that consists of a K\"ahler metric $g$, a real holomorphic vector field $X$ and a constant $\lambda \in \mathbb{R}$, satisfying the equation
\begin{align*}
    \operatorname{Ric}(g) + \mathcal{L}_X g = \lambda g.
\end{align*}
Moreover, if there exist potential functions $f \in C^\infty (M)$, such that $X =\frac{1}{2} \nabla f$, we call $g $ a gradient K\"ahler-Ricci soliton. A K\"ahler-Ricci soliton $g$ is said to be trivial if $X$ is a Killing vector field, in which case $g$ reduces to a K\"ahler-Einstein metric with Einstein constant $\lambda$. In \cite{hamilton1988}, Hamilton first showed that all compact gradient steady and expanding solitons are Einstein metrics. Combined with Perelman's work \cite{perelman2002entropy}, which shows that all compact solitons are gradient, it follows that there are no non-trivial compact K\"ahler–Ricci solitons if the first Chern class of the K\"ahler manifold is non-positive. On compact Fano manifolds (that is, K\"ahler manifolds with positive first Chern class), if there exists a non-trivial K\"ahler-Ricci soliton $g$ with a non-trivial real holomorphic vector field $X$, then the Futaki invariant for $X^{1,0}$,
$$
F(X^{1,0}) = \int_M |X^{1,0}|^2 \omega_g^n
$$
is strictly positive. Consequently, the existence of a non-trivial K\"ahler-Ricci soliton obstructs the existence of a K\"ahler-Einstein metric \cite{futaki1983obstruction}. 

Perelman \cite{perelman2002entropy} revealed that all steady and expanding solitons must be non-compact.  Since then, significant progress has been made in understanding non-compact gradient Kähler–Ricci solitons. Foundational contributions include Feldman, Ilmanen and Knopf \cite{feldman2003rotationally}, and Cao \cite{cao1996existence,cao1997limits}.  More recent developments have enriched this landscape substantially.  Conlon, Deruelle, and Sun \cite{conlon2024classification} showed that any two-dimensional complete shrinking gradient Kähler–Ricci soliton whose scalar curvature decays to zero at infinity is, up to pullback by \(GL(2,\mathbb{C})\), either the flat Gaussian soliton or the \(U(2)\)-invariant Feldman–Ilmanen–Knopf soliton, and that any complete expanding gradient Kähler–Ricci soliton with quadratic curvature decay is uniquely determined by its asymptotic Kähler cone (provided the cone admits a smooth canonical model).  Furthermore, Bamler, Cifarelli, Conlon, and Deruelle \cite{bamler2024new} completed the two-dimensional classification of shrinkers with bounded scalar curvature. However, despite these advances on complete gradient K\"ahler-Ricci solitons, the existence of K\"ahler-Ricci solitons remains unexplored on pseudoconvex domains.

The Bergman metric, as an intrinsic K\"ahler metric on bounded pseudoconvex domains $\Omega \subset \mathbb{C}^n$, is given by the reproducing Bergman kernel $K_\Omega(z, \bar{z})$ on holomorphic $L^2$ space. The study of the Bergman kernel and metric has been a central topic in several complex variables since they were presented, deeply connected with the shape of the domain. Recall the foundational result established by Mok and Yau \cite{mokyau} that a bounded domain is pseudoconvex if and only if it admits a complete K\"ahler-Einstein metric. For those two metrics on a bounded pseudoconvex domain, Yau posed a deep question: characterize pseudoconvex domains, whose Bergman metric is K\"ahler-Einstein. After, a more achievable version was asked by Cheng \cite{cheng1979openproblems} that for $C^\infty$ strictly pseudoconvex domains, the Bergman metric is K\"ahler-Einstein if and only if the domain is biholomorphic to the unit ball, which was later resolved affirmatively by Huang and Xiao \cite{huang2021bergman}.

Notably, the Bergman metric exhibits structural analogies to the gradient K\"ahler-Ricci soliton, due to the behavior of the Bergman invariant function. Such informal kinship motivates the following question, bridging the Bergman metric and the K\"ahler-Ricci soliton:
\begin{question}\label{Q1}
    Is the Bergman metric always given by a K\"ahler-Ricci soliton on strictly pseudoconvex domains? Can we characterize domains endowed with the Bergman K\"ahler-Ricci soliton?
\end{question}
To understand this question, we investigated K\"ahler-Ricci solitons on bounded pseudoconvex domains. It is known that a bounded domain in a complex manifold with $C^1$-boundary admitting a K\"ahler metric is pseudoconvex \cite{ohsawa1980complete}. Our main result is the following:

\begin{theorem}\label{MT1}
Let $\Omega \subset \mathbb{C}^n$ be a bounded pseudoconvex domain with $C^2$-boundary, and let $g$ be a complete K\"ahler metric on $\Omega$ of $C^1$-bounded geometry. Suppose there exists a compact subset $K \subset \subset \Omega$ such that 
$$ 
-C_1 g \geq \operatorname{Ric}(g)\geq -C_2 g \quad \text{in} \quad \Omega \setminus K, 
$$
for $C_2 \geq C_1 >0$. If $g$ is a K\"ahler-Ricci soliton with a real holomorphic vector field $X$ such that the dual 1-form of $X$ with respect to $g$ is closed, then $g$ is K\"ahler-Einstein.
\end{theorem}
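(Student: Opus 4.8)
The plan is to use the closedness hypothesis to put the soliton into gradient (Bakry--\'Emery) form, extract the standard soliton identities, and then run a Bochner formula against the Omori--Yau maximum principle; the boundedness of $\Omega$ will enter only at the very end, through the flow of the soliton vector field.

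First I would reduce to the gradient case. Since $X$ is real holomorphic, its $(1,0)$-part is holomorphic, so writing $X^{\flat}=\iota_X g$ locally as $\tfrac12 df$ (possible because $X^{\flat}$ is closed) gives $\nabla_i\nabla_j f=0$, so the real Hessian of $f$ is of type $(1,1)$; moreover the soliton identity $\nabla\bigl(R+|\nabla f|^{2}-2\lambda f\bigr)=0$, which follows from the contracted Bianchi identity $\nabla R=2\operatorname{Ric}(\nabla f,\cdot)$, upgrades $f$ to a global potential on the connected domain $\Omega$. Thus the soliton equation becomes $\operatorname{Ric}(g)+i\partial\bar\partial f=\lambda\,\omega_g$, i.e.\ $(\Omega,g,e^{-f}dV_g)$ has constant Bakry--\'Emery Ricci tensor $\operatorname{Ric}_f:=\operatorname{Ric}(g)+\operatorname{Hess} f=\lambda g$. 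Because $\operatorname{Ric}(g)$ is pinched on $\Omega\setminus K$ and $K$ is compact, $\operatorname{Ric}(g)$ is globally bounded; hence $\operatorname{Hess} f=\lambda g-\operatorname{Ric}(g)$ and the scalar curvature $R$ are globally bounded, and (as $\operatorname{Ric}(g)$ is bounded below) the Omori--Yau maximum principle holds on $(\Omega,g)$. I also record the Bochner formula for a gradient Ricci soliton, $\tfrac12\Delta_f|\nabla f|^{2}=|\operatorname{Hess} f|^{2}-\lambda|\nabla f|^{2}$ with $\Delta_f=\Delta-\langle\nabla f,\nabla\cdot\rangle$, and the relation $2\lambda f=R+|\nabla f|^{2}-\mathrm{const}$.

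Next I would pin down the sign of $\lambda$ and a growth bound for $f$. If $\lambda\ge 0$, then on $\Omega\setminus K$ one has $\operatorname{Hess} f=\lambda g-\operatorname{Ric}(g)\ge(\lambda+C_1)g>0$, so $f$ is uniformly convex outside $K$, hence a proper exhaustion with $|\nabla f|\to\infty$ along geodesics leaving $K$; this contradicts $|\nabla f|$ being bounded when $\lambda=0$ (from $R+|\nabla f|^{2}=\mathrm{const}$), and for $\lambda>0$ it makes the soliton a complete gradient shrinker, contradicting the fact that such solitons have nonnegative scalar curvature while here $R<0$ on $\Omega\setminus K$. Hence $\lambda<0$, and then $2\lambda f=R+|\nabla f|^{2}-\mathrm{const}\ge\inf R-\mathrm{const}$ shows $f$ is bounded above. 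Now the Bochner formula reads $\tfrac12\Delta_f|\nabla f|^{2}=|\operatorname{Hess} f|^{2}+|\lambda|\,|\nabla f|^{2}\ge|\lambda|\,|\nabla f|^{2}\ge 0$, so $|\nabla f|^{2}$ is nonnegative and strictly $\Delta_f$-subharmonic. If $|\nabla f|$ is bounded, then $f=\tfrac1{2\lambda}(R+|\nabla f|^{2}-\mathrm{const})$ is bounded, so the Omori--Yau principle for $\Delta_f$ applies (using $\operatorname{Ric}_f=\lambda g$ bounded below and $f$ bounded); applying it to $u=|\nabla f|^{2}$ produces $p_k$ with $u(p_k)\to\sup u$ and $\limsup_k\Delta_f u(p_k)\le 0$, whereas $\Delta_f u(p_k)\ge 2|\lambda|u(p_k)\to 2|\lambda|\sup u$. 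Therefore $\sup u=0$, $f$ is constant, and $g$ is K\"ahler--Einstein. Everything is thus reduced to showing that $|\nabla f|$ is bounded.

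The hard part, and the point where boundedness of $\Omega$ is indispensable, is exactly this last bound. Since $\nabla f$ is real holomorphic and the gradient field of a complete gradient Ricci soliton is complete, its flow $\{\psi_t\}_{t\in\mathbb R}$ is a one-parameter subgroup of the (finite-dimensional, properly acting) group $\operatorname{Aut}(\Omega)$, and the soliton is, up to rescaling, the pullback of $g$ by this flow. Along any nonconstant orbit, $f$ is strictly increasing ($\tfrac{d}{dt}f=|\nabla f|^{2}$) and bounded above. If $\{\psi_t\}$ is relatively compact in $\operatorname{Aut}(\Omega)$, its closure is a torus, each orbit is dense in a compact torus orbit, and returning arbitrarily near its starting point while $f$ increases monotonically forces $f$ to be constant on that orbit; hence $\nabla f\equiv 0$ and we are done. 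If $\{\psi_t\}$ is not relatively compact, properness of the action forces $\psi_t(x)\to\partial\Omega$ as $t\to+\infty$ or $t\to-\infty$; then $f$ increases to a finite limit along the orbit, $\int_0^{\infty}|\nabla f|^{2}(\psi_t(x))\,dt<\infty$, so $|\nabla f|\to 0$ along a sequence of points escaping to $\partial\Omega$, and (via the Bochner formula together with the uniform estimates furnished by $C^{1}$-bounded geometry) $\operatorname{Ric}(g)-\lambda g\to 0$ near that end. The crux of the proof will be to rule out this escaping alternative — equivalently, to show that a bounded pseudoconvex domain admits no nontrivial expanding K\"ahler--Ricci soliton of $C^1$-bounded geometry with the stated Ricci pinching — presumably by a pointed $C^{1,\alpha}$-compactness argument for $(\Omega,g,\psi_{t_k}(x))$ combined with uniqueness of the Cheng--Mok--Yau K\"ahler--Einstein metric, or by a direct global estimate on $|\nabla f|$ exploiting $C^{1}$-bounded geometry. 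Once the escaping case is excluded, $\{\psi_t\}$ is relatively compact, $\nabla f\equiv 0$, and $g$ is K\"ahler--Einstein.
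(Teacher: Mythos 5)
Your overall architecture matches the paper's up to the final step: establish $\lambda<0$, derive a Bochner-type differential inequality of the form $\Delta_f|X|^2\geq c\,|X|^2$ with $c>0$, and close with the Omori--Yau maximum principle applied to the bounded function $|X|^2$ to force $\sup|X|^2=0$. (The paper does this without passing to a global potential: it works directly with the closed dual $1$-form, applies Omori--Yau to $\Delta|X|^2\geq \nabla_X|X|^2+|X|^2$, and uses $|\nabla|X|^2|(z_k)\to 0$ together with the boundedness of $|X|$ to kill the drift term. Your reduction to a global potential $f$ is an unnecessary detour and is itself slightly circular as written, since the conservation law $\nabla(R+|\nabla f|^2-2\lambda f)=0$ presupposes the global $f$ you are trying to construct; for $\lambda\neq 0$ one can repair this by defining $f:=\tfrac{1}{2\lambda}(R+|X|^2)$ and checking $df=X^{\flat}$ locally, but the paper avoids the issue entirely.)

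The genuine gap is exactly where you say ``the crux of the proof will be'': you never prove that $|X|$ (equivalently $|\nabla f|$) is bounded, and the two routes you gesture at --- a pointed compactness argument against the Cheng--Mok--Yau metric, or a dynamical analysis of the closure of the flow $\{\psi_t\}$ in $\operatorname{Aut}(\Omega)$ --- are speculative and not carried out (the dynamical route also presupposes completeness of the flow of $X$, which is not free before $|X|$ is bounded). The irony is that you already wrote down the identity that closes this gap: the contracted Bianchi identity together with closedness of $X^{\flat}$ and holomorphy of $X$ gives the $1$-form identity $\nabla R=2\operatorname{Ric}(X,\cdot)$, which you quote only to derive the conservation law. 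Pair it with $X$ instead: on $\Omega\setminus K$ the pinching $\operatorname{Ric}\leq -C_1 g$ gives
\begin{align*}
-2C_1|X|^2\;\geq\;2\operatorname{Ric}(X,X)\;=\;\langle\nabla R,X\rangle\;\geq\;-|\nabla R|\,|X|,
\end{align*}
hence $|X|\leq |\nabla R|/(2C_1)$ there, and $|\nabla R|$ is globally bounded precisely because $g$ has $C^1$-bounded geometry; combined with compactness of $K$ this yields $\sup_\Omega|X|<\infty$ in one line. This is the paper's key estimate, it is the only place the $C^1$ (rather than $C^0$) bounded geometry hypothesis is used, and without it your argument does not reach a conclusion.
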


As an important application, based on Huang-Xiao's proof \cite{huang2021bergman} for Cheng's conjecture, we have the following analogous statement:
\begin{theorem}[see Theorem \ref{thm:BKRS-BKE}]
Let $\Omega$ be a bounded strictly pseudoconvex domain in $\mathbb{C}^n$ with $C^\infty$-boundary and let $g_B$ be the Bergman metric. If $g_B$ is a K\"ahler-Ricci soliton, then $\Omega$ is biholomorphic to the ball.
\end{theorem}

On the compact K\"ahler manifold with negative first Chern class, i.e. admits a K\"ahler-Einstein metric with negative Ricci curvature, it is well known that every K\"ahler–Ricci soliton is trivial. In contrast, for complete manifolds such as bounded pseudoconvex domains with $C^2$-boundary, which always admit a K\"ahler–Einstein metric—the situation is less clear. An interesting question naturally arises from these considerations:
\begin{question}
Does there exist any non-trivial K\"ahler-Ricci soliton on bounded pseudoconvex domains with $C^2$-boundary?
\end{question}
Our main result states that if the Ricci curvature of a complete K\"ahler metric is asymptotically negative, then the metric cannot admit any soliton structure unless it is K\"ahler-Einstein. In fact, if a real holomorphic vector field is the gradient of some potential function, then its dual $1$-form is closed. However, the converse is not true due to potential topological obstructions.

Nonetheless, since not all complete K\"ahler metrics satisfy this asymptotic Ricci curvature condition (for example, see \cite{krantz1996bergman} on the boundary behavior of the Bergman metric on pseudoconvex domains), one may ask: \emph{Can we prove that every K\"ahler–Ricci soliton on bounded pseudoconvex domains with $C^2$-boundary is trivial, or can we construct an example that violates the assumptions of our main theorem?} It is worth noting that many domains equipped with complete K\"ahler metrics satisfy our assumptions. In Sections 4-6, we provide detailed explanations and several examples as applications.

\subsection*{Acknowledgement}
The author would like to express his sincere gratitude to his advisors Gérard Besson and Hervé Gaussier for their encouragement, guidance, and discussion. The author also thanks Alix Deruelle and Vincent Koziarz for their valuable comments on an earlier version of this manuscript.
 
\section{Preliminaries}
\subsection{Notations}
Let $(M^n,g)$ be an $n$-dimensional complete K\"ahler manifold. In local coordinates $(z^1,...,z^n)$, the K\"ahler metric can be expressed as 
$$
g=g_{i\bar{j}} dz^i d\bar{z}^j,
$$
and we will use the Einstein summation convention. Denote the matrix of metric components by $\left(g_{i\bar{j}}\right)$ with the inverse $\left(g^{i\bar{j}}\right)$ for $1\leq i,j \leq n$, satisfying $g^{k\bar{l}} g_{k\bar{j}} =\delta^{l}_{j}$. For any two tensors $A,B$ of type $(m,n)$, the Hermitian inner product of $A,B$ with respect to $g$ is defined by 
\begin{align*}
    g( A,\bar{B} ):= g^{i_1 \bar{j}_1} \cdots g^{i_n \bar{j}_n} g_{k_1 \bar{l}_1} \cdots g_{k_m \bar{l}_m} A^{k_1 \cdots k_m}_{i_1 \cdots i_n} \overline{B^{l_1 \cdots l_m}_{j_1 \cdots j_n}},
\end{align*}
and the norm of $A$ is defined by $|A|^2:= g (A, \bar{A} )$.

The Ricci curvature $\operatorname{Ric}(g)$ is defined by 
$$
\operatorname{Ric}(g)=R_{i\bar{j}} dz^i d\bar{z}^j \quad \text{where} \quad R_{i\bar{j}}= -\partial_i \partial_{\bar{j}} \log \left(\det(g_{k\bar{l}})\right).
$$
And, the scalar curvature $R(g)$ is defined by the trace of the Ricci curvature with respect to $g$
$$
R(g)=\operatorname{tr}_g \left(\operatorname{Ric}(g)\right)=g^{i\bar{j}} R_{i\bar{j}},
$$
where $\partial_i$ and $\partial_{\bar{j}}$ are short notations for $\frac{\partial}{\partial z^i}$ and $\frac{\partial}{\partial \overline{z}^j}$ respectively. 

Let $\nabla$ be the associated Levi-Civita connection of the K\"ahler metric $g$. Covariant derivatives are denoted by
\begin{align*}
    \nabla_i = \nabla_{\partial_i} \quad \text{and} \quad \nabla_{\bar{j}}= \nabla_{\partial_{\bar{j}}}.
\end{align*}
It is remarkable that the covariant derivative $\nabla_i \nabla_{\bar{j}}$ coincides with the partial derivative $\partial_i \partial_{\bar{j}}$ when applied to $C^2$ functions, since mixed type Christoffel symbols $\Gamma^{\cdot}_{i\bar{j}}$ vanish. For any $C^\infty$ function $f : M \rightarrow \mathbb{R}$, the gradient vector field of $f$ is defined by
\begin{align*}
    \nabla f := g^{i\bar{j}} \partial_i f \cdot \partial_{\bar{j}} + g^{i\bar{j}} \partial_{\bar{j}}f \cdot \partial_i.
\end{align*}
Let $\nabla^2 f := \left(\partial_i \partial_{\bar{j}} f\right)$ be the complex Hessian of $f$, then the Laplacian is defined by the trace of $\nabla^2 f$ with respect to $g$, that is
\begin{align*}
    \Delta f := \operatorname{tr}_g \left(\nabla^2 f\right)= g^{i\bar{j}} \partial_i \partial_{\bar{j}}  f.
\end{align*}
\subsection{K\"ahler-Einstein metric and K\"ahler-Ricci soliton}
We say a K\"ahler metric is K\"ahler-Einstein if the Ricci curvature is proportional to the metric, that is 
\begin{align} \label{KE eq}
    \operatorname{Ric}(g) = \lambda g,
\end{align}
for some $\lambda \in \mathbb{R}$. By rescaling the metric, we can assume that $\lambda= 1,~0$ or $-1$.

A vector field $X$ is said to be real if $\bar{X}=X$, where $\bar{X}$ is the complex conjugate of $X$. Recall that a real holomorphic vector field $X$ is a real vector field such that its $(1,0)$-part, that is $X^{1,0}:=X-\sqrt{-1}JX$ is holomorphic for the fixed complex structure $J$. We say a K\"ahler metric $g$ is a K\"ahler-Ricci soliton if there is an associated real holomorphic vector field $X$ and a constant $\lambda \in \mathbb{R}$ such that
\begin{equation} \label{Soliton eq}
    \operatorname{Ric}(g) + \mathcal{L}_X g = \lambda g,
\end{equation}
where $\mathcal{L}_X g$ is the Lie derivative of the K\"ahler metric $g$ along $X$. Suppose that there exists a $C^\infty$ real-valued function $f$ such that $X=\frac{1}{2}\nabla f$, then we say $g$ is a gradient K\"ahler-Ricci soliton and $f$ is the potential function. The assumption of $\nabla f$ being a real holomorphic vector field is equivalent to $\nabla_i \nabla_j f=0$ for all $1\leq i,j \leq n$. The K\"ahler-Ricci soliton is said to be shrinking if $\lambda >0$, steady if $\lambda =0$, and expanding if $\lambda < 0$. Note that if the Lie derivative term vanishes, the soliton equation (\ref{Soliton eq}) reduces to the K\"ahler-Einstein equation (\ref{KE eq}).

\subsection{Bounded geometry and Omori-Yau's maximum principle}
To establish our main result, we first introduce the concept of (quasi-)bounded geometry. Recall that the injectivity radius at a point $x \in M$ is the maximum radius $r$ of the ball $B_r$ in the tangent space $T_x M$ for which the exponential map $\operatorname{exp}_x : B_r \rightarrow \operatorname{exp}_x(B_r) \subset M$ is a diffeomorphism. The injectivity radius of $M$ is the infimum of the injectivity radius at all points in $M$.
\begin{definition}
    Let $(M, g)$ be a complete K\"ahler manifold and let $ k\geq 0 $ be an integer. We say $(M,g)$ has $C^k$-quasi-bounded geometry if for each non-negative integer $l \leq k$, there exists a constant $C_l > 0$ such that
\begin{equation}
    \sup_M |\nabla^l \operatorname{Rm}| \leq C_l,
\end{equation}
where $\operatorname{Rm}=\{R_{i\bar{j}k\bar{l}}\}$ is the Riemann curvature tensor of $g$ and $\nabla^l$ is the covariant derivative of order $l$. Moreover, if $(M, g)$  has a positive injectivity radius, then we say $(M, g)$ has $C^k$-bounded geometry.
\end{definition}

Next, let us introduce Omori-Yau's generalized maximum principle on non-compact manifolds which serves as a crucial tool in our approach.
\begin{proposition}[\cite{omori1967isometric,yau1975harmonic}]
Let $(M, g)$ be a complete K\"ahler manifold. Assume that $g$ has bounded sectional curvature, then for any function $u \in C^2(M) $ with $\sup_M u < +\infty$, there exists a sequence of points $\{z_k\}_{k \in \mathbb{N}} \subset M$ satisfying 
$$
(1)\lim_{k \rightarrow \infty} u(z_k)=\sup_M u,~~(2)\lim_{k \rightarrow \infty} |\nabla u(z_k)|=0 ,~~ (3)\limsup_{k \rightarrow \infty} \nabla^2 u (z_k)\leq 0, 
$$
where the last inequality holds in the sense of matrices. 
\end{proposition}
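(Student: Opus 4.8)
The plan is to establish the principle by a perturbation argument. Fix a base point $o \in M$ and let $\rho(\cdot) = d(\cdot,o)$ be the distance function; set $\varphi := \sqrt{1+\rho^2}$, so that $\varphi$ is proper, nonnegative, and satisfies $|\nabla\varphi| \leq 1$ wherever it is smooth. For $\varepsilon > 0$ consider the perturbed function $u_\varepsilon := u - \varepsilon\varphi$. Since $u$ is bounded above and $\varphi \to +\infty$ at infinity, $u_\varepsilon$ is continuous and tends to $-\infty$ at infinity; by completeness its sublevel sets are compact (Hopf--Rinow), so $u_\varepsilon$ attains its supremum at some point $x_\varepsilon \in M$. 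The idea is to read the three conclusions off the maximum point $x_\varepsilon$ and then send $\varepsilon \to 0$.

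The core estimate I would need is an upper bound on the Hessian of $\varphi$. The hypothesis of bounded sectional curvature supplies a lower bound, say sectional curvatures $\geq -K^2$, and the Hessian comparison theorem then gives
\[
\operatorname{Hess}\rho \;\leq\; K\coth(K\rho)\,\bigl(g - d\rho\otimes d\rho\bigr)
\]
away from the cut locus. A direct computation from $\varphi = \sqrt{1+\rho^2}$ shows $\operatorname{Hess}\varphi \leq C g$ for a constant $C = C(n,K)$, the point being that the coefficient $\tfrac{\rho}{\sqrt{1+\rho^2}}\,K\coth(K\rho)$ remains bounded both as $\rho \to 0$ and as $\rho \to \infty$. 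Since the complex Hessian $\nabla^2\varphi = (\partial_i\partial_{\bar j}\varphi)$ is the $J$-invariant part of the real Hessian, it obeys the same bound $\nabla^2\varphi \leq Cg$.

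The main obstacle is that $\rho$, and hence $\varphi$, is in general only Lipschitz and fails to be smooth at the cut locus of $o$, so $x_\varepsilon$ need not be a smooth point of $u_\varepsilon$. I would resolve this by Calabi's trick: let $\gamma$ be a minimizing unit-speed geodesic from $o$ to $x_\varepsilon$ and, for small $\delta > 0$, replace $\rho$ near $x_\varepsilon$ by $\rho_\delta := \delta + d(\gamma(\delta),\cdot)$. By the triangle inequality $\rho_\delta \geq \rho$ with equality at $x_\varepsilon$, while $\rho_\delta$ is smooth near $x_\varepsilon$ (the point is no longer in the cut locus of $\gamma(\delta)$), so $\varphi_\delta := \sqrt{1+\rho_\delta^2}$ is a smooth upper support function for $\varphi$ touching it at $x_\varepsilon$. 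Consequently $x_\varepsilon$ is still a local maximum of the $C^2$ function $u - \varepsilon\varphi_\delta$, and the ordinary second-derivative test yields $\nabla u(x_\varepsilon) = \varepsilon\nabla\varphi_\delta(x_\varepsilon)$ and $\nabla^2 u(x_\varepsilon) \leq \varepsilon\nabla^2\varphi_\delta(x_\varepsilon)$; letting $\delta \to 0$ and using the estimate of the previous paragraph gives $|\nabla u(x_\varepsilon)| \leq \varepsilon$ and $\nabla^2 u(x_\varepsilon) \leq \varepsilon C g$.

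It remains to check $u(x_\varepsilon) \to \sup_M u$, where the ordering of limits is crucial. Given $\delta' > 0$, pick $p$ with $u(p) > \sup_M u - \delta'$; then $u(x_\varepsilon) \geq u_\varepsilon(x_\varepsilon) = \sup_M u_\varepsilon \geq u_\varepsilon(p) = u(p) - \varepsilon\varphi(p)$, and since $\varphi(p)$ is now a fixed finite number, letting $\varepsilon \to 0$ first and then $\delta' \to 0$ forces $u(x_\varepsilon) \to \sup_M u$. Setting $\varepsilon = 1/k$ and $z_k := x_{1/k}$ produces the required sequence: conclusion (1) is this limit, conclusion (2) follows from $|\nabla u(z_k)| \leq 1/k$, and conclusion (3) follows from $\nabla^2 u(z_k) \leq (C/k)\,g \to 0$. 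Apart from the cut-locus regularity, which Calabi's trick handles, the argument is just the comparison computation together with this careful passage to the limit.
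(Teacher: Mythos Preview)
The paper does not supply a proof of this proposition; it is quoted as a classical result from Omori and Yau and used as a black box in Section~3. Your argument is the standard one: perturb $u$ by a smoothed distance function $\varphi=\sqrt{1+\rho^2}$, use the lower sectional-curvature bound together with Hessian comparison to get $\operatorname{Hess}\varphi\le Cg$, invoke Calabi's support-function trick to bypass the cut locus, and read off the gradient and Hessian conditions at the maximum of $u-\varepsilon\varphi$ before sending $\varepsilon\to 0$. This is exactly how the result is proved in the original sources and in the textbook literature (e.g.\ Al\'ias--Mastrolia--Rigoli, which the paper also cites), so there is nothing to compare: your proof is correct and is the expected one. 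One small point worth making explicit in a full write-up is that when $x_\varepsilon$ lies within the injectivity radius of $o$ no trick is needed, while when it lies in the cut locus one has $\rho(x_\varepsilon)$ bounded below, so that the coefficient $\tfrac{\rho_\delta}{\sqrt{1+\rho_\delta^2}}\,K\coth\bigl(K\,d(\gamma(\delta),\cdot)\bigr)$ stays uniformly bounded as $\delta\to 0$; this justifies passing to the limit in the Hessian estimate for $\varphi_\delta$.
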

\begin{remark}
    If we replace the assumption of bounded sectional curvature in the generalized maximum principle with bounded Ricci curvature, then $(3)$ is replaced by 
    $$
(3^\prime) \limsup_{k \rightarrow \infty} \Delta u(z_k) \leq 0.
    $$
\end{remark}
\section{Proof of Theorem \ref{MT1}}

Let $\Omega \subset \mathbb{C}^n$ be a bounded pseudoconvex domain with $C^2$-boundary and let $g$ be a complete K\"ahler metric on $\Omega$. Suppose that $g$ is a K\"ahler-Ricci soliton satisfying
\begin{align*}
    \operatorname{Ric}(g) + \mathcal{L}_X g = \lambda g,
\end{align*}
for a real holomorphic vector field $X$ and a constant $\lambda \in \mathbb{R}$. We first observe that the K\"ahler-Ricci soliton on a bounded domain must be expanding by our assumption, otherwise it will contradict the sharp lower bounded estimates for the scalar curvature of Ricci solitons.
\begin{proposition}
Let $\omega$ be a complete K\"ahler-Ricci soliton on a bounded domain $\Omega\subset\mathbb{C}^n$. If there is a compact subset $K\subset \subset \Omega$ such that $\operatorname{Ric}(\omega) \leq -C\omega$ in $\Omega \setminus K$, then the soliton constant $\lambda <0$.
\end{proposition}

In the subsequent part of this section, without loss of generality, we may assume that $\lambda = -1$. We now recall the following equation for the scalar curvature $R$ when $\lambda = -1$.

\begin{lemma}[\cite{alias2016maximum}, Proposition 8.3]
    Let $S:= R+n$, then 
\begin{equation} \label{Scalar S}
    \Delta S -\langle X,\nabla S \rangle- S + |\operatorname{Ric}(g)+g|^2 =0.
\end{equation}
\end{lemma}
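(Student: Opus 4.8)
The plan is to reduce \eqref{Scalar S} to the classical computation of how the scalar curvature evolves along a gradient K\"ahler--Ricci soliton. Since the $g$-dual $1$-form $\theta$ of $X$ is closed, on any ball $U\subset\Omega$ the Poincar\'e lemma provides a real $f\in C^\infty(U)$ with $\theta=\tfrac12\,df$, so $X=\tfrac12\nabla f$ on $U$ and $g|_U$ is a gradient K\"ahler--Ricci soliton with potential $f$; the real holomorphicity of $X$ moreover forces $\nabla_i\nabla_j f=0$, hence $\nabla_{\bar i}\nabla_{\bar j}f=0$. As \eqref{Scalar S} is a pointwise identity among scalars built only from $g$ and $X$, it suffices to verify it on each such $U$. (Alternatively one may bypass the local potential: closedness of $\theta$ is exactly the condition making the $(1,1)$-tensor $\mathcal{L}_X g$ obey the same algebraic identities as the complex Hessian of a potential, and the computation below applies verbatim with $f$ replaced by $X$.) In local holomorphic coordinates the soliton equation with $\lambda=-1$ reads $R_{i\bar j}+\nabla_i\nabla_{\bar j}f=-g_{i\bar j}$; tracing with $g^{i\bar j}$ gives $R+\Delta f=-n$, i.e. $\Delta f=-S$.

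The first step is the first-order identity
\begin{align*}
 \nabla_{\bar j}R=g^{i\bar k}R_{i\bar j}\,\nabla_{\bar k}f,
\end{align*}
equivalently $\nabla_{\bar j}R=2R_{i\bar j}X^i$ with $X^i\partial_i$ the $(1,0)$-part of $X$. To prove it, differentiate $R=-n-\Delta f$, write $\Delta f=g^{k\bar l}\nabla_k\nabla_{\bar l}f$, and commute $\nabla_{\bar j}$ past $\nabla_k$: since $\nabla_{\bar j}\nabla_{\bar l}f=0$ and, on a K\"ahler manifold, the only nonvanishing curvature commutator is the mixed one, whose relevant contraction is the Ricci tensor, one is left precisely with the Ricci term, which yields the displayed formula. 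Contracting this identity against $X^{\bar j}$ also shows that the a priori complex scalar $g^{i\bar k}(\nabla_i R)(\nabla_{\bar k}f)$ is real and coincides with $\langle X,\nabla R\rangle=\langle X,\nabla S\rangle$.

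The second step is to apply $g^{p\bar j}\nabla_p$ to the first-order identity:
\begin{align*}
 \Delta S=\Delta R &= g^{p\bar j}\nabla_p\!\left(g^{i\bar k}R_{i\bar j}\nabla_{\bar k}f\right)\\
 &= g^{i\bar k}\!\left(g^{p\bar j}\nabla_p R_{i\bar j}\right)\nabla_{\bar k}f+g^{p\bar j}g^{i\bar k}R_{i\bar j}\,\nabla_p\nabla_{\bar k}f.
\end{align*}
By the contracted second Bianchi identity $g^{p\bar j}\nabla_p R_{i\bar j}=\nabla_i R$, so the first term equals $g^{i\bar k}(\nabla_i R)(\nabla_{\bar k}f)=\langle X,\nabla S\rangle$. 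For the second term, substitute $\nabla_p\nabla_{\bar k}f=-g_{p\bar k}-R_{p\bar k}$ from the soliton equation; the two contractions give $-R$ and $-|\operatorname{Ric}(g)|^2$ respectively, so the second term equals $-R-|\operatorname{Ric}(g)|^2$. Hence $\Delta S=\langle X,\nabla S\rangle-R-|\operatorname{Ric}(g)|^2$. Finally, from $\langle\operatorname{Ric}(g),g\rangle=R$ and $|g|^2=n$ one has the pointwise identity $|\operatorname{Ric}(g)+g|^2=|\operatorname{Ric}(g)|^2+2R+n$; together with $R=S-n$ this rewrites $-R-|\operatorname{Ric}(g)|^2$ as $S-|\operatorname{Ric}(g)+g|^2$, which is precisely \eqref{Scalar S}.

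The conceptually empty but technically delicate point is the bookkeeping of signs in the curvature commutator when establishing the first-order identity, together with checking that $g^{i\bar k}(\nabla_i R)(\nabla_{\bar k}f)$ is real so that it genuinely equals $\langle X,\nabla S\rangle$; the rest is algebra. One may also simply invoke \cite[Proposition 8.3]{alias2016maximum}, of which \eqref{Scalar S} is the K\"ahler specialization, after the reduction to a (local) gradient soliton.
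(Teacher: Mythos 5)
Your computation is correct, and it is worth noting that the paper itself offers no proof of this lemma at all: it is stated as a citation to \cite[Proposition~8.3]{alias2016maximum}, so your self-contained derivation is a genuine addition rather than a rederivation of the paper's argument. Your route --- use closedness of $X^{\mathfrak{b}}$ to produce a local potential, trace the soliton equation to get $\Delta f=-S$, derive $\nabla_{\bar j}R=g^{i\bar k}R_{i\bar j}\nabla_{\bar k}f$ by commuting derivatives, then apply $g^{p\bar j}\nabla_p$ and use the contracted Bianchi identity --- is exactly the standard gradient-soliton computation, and all the bookkeeping checks out: the reality of $g^{i\bar k}(\nabla_iR)(\nabla_{\bar k}f)=4R_{i\bar j}X^iX^{\bar j}$, the substitution $\nabla_p\nabla_{\bar k}f=-g_{p\bar k}-R_{p\bar k}$ giving $-R-|\operatorname{Ric}|^2$, and the algebraic identity $|\operatorname{Ric}+g|^2=|\operatorname{Ric}|^2+2R+n$ under the paper's norm convention. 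In fact your first-order identity is precisely the identity $\nabla_iR=2R_{i\bar l}X^{\bar l}$ that the paper later rederives inside the proof of Theorem~\ref{MT1}, so your proof also makes the internal logic more transparent.

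One caveat deserves emphasis. The lemma as printed (and Proposition~3.3, where it is invoked) assumes only that $X$ is real holomorphic, not that $X^{\mathfrak{b}}$ is closed; your proof, including the parenthetical ``applies verbatim'' remark, genuinely uses $\nabla_iX_{\bar j}=\nabla_{\bar j}X_i$. Without closedness the trace of the soliton equation reads $R+\nabla_kX^k+\overline{\nabla_kX^k}=-n$, the divergence $\nabla_kX^k$ need not be real, and the first step $\nabla_{\bar j}R=g^{i\bar k}R_{i\bar j}\nabla_{\bar k}f$ is no longer automatic --- the Bianchi computation instead yields $\nabla_iR+\nabla_i(\nabla_kX^k)=R_{i\bar l}X^{\bar l}$, which only collapses to the desired identity once $\nabla_kX^k$ is known to be real (equivalently, once the $(1,1)$-part of $dX^{\mathfrak{b}}$ vanishes). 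So your argument proves the lemma in the generality actually needed for Theorem~\ref{MT1}, where closedness is hypothesized, but for the statement at face value one must either import that hypothesis explicitly or fall back on the generic-soliton version in the cited reference. It would be worth flagging this explicitly rather than burying it in the parenthetical.
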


Note that the scalar curvature reaches its infimum only on the boundary, thanks to the sharp lower bound estimates for Ricci solitons. By applying the generalized maximum principle to (\ref{Scalar S}), we can deduce the asymptotic behavior of the K\"ahler-Ricci soliton near boundary points where the scalar curvature reaches its minimum.
\begin{proposition}
Let $\Omega \subset \mathbb{C}^n$ be a bounded pseudoconvex domain with $C^2$-boundary, and let $g$ be a complete K\"ahler-Ricci soliton associated with a real holomorphic vector field $X$ such that $|\operatorname{Ric}(g)|\leq K$ for some constant $K>0$. Suppose $R(p) = -n$ for some $p \in \partial \Omega$, meaning that for any sequence $\{p_m\}$ converging to $p$ we have $R(p_m)\rightarrow -n$. If $|X|=O(|\nabla R|^{-1+\epsilon})$ for $\epsilon>0$, then there exists a sequence $p_k \rightarrow p$ such that 
$$
\lim_{k\rightarrow \infty}|\operatorname{Ric}(g) + g|(p_k) = 0.
$$   
\end{proposition}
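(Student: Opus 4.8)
The plan is to combine the scalar-curvature identity \eqref{Scalar S} with a \emph{localized} application of the Omori--Yau maximum principle, the localization forcing the almost-minimizing sequence to converge to the prescribed point $p$. Normalize $\lambda=-1$ and set $S:=R+n$; by the sharp scalar lower bound for Ricci solitons $S\ge 0$ on $\Omega$, and the hypothesis $R\to -n$ at $p$ says precisely that $\inf_\Omega S=0$ and that this infimum is approached along any sequence tending to $p$. (If $S$ vanishes somewhere in $\Omega$, a strong maximum principle applied to \eqref{Scalar S} --- whose left-hand side is the operator $\Delta-\langle X,\nabla\,\cdot\,\rangle-1$ applied to $S$ and equals $-|\operatorname{Ric}(g)+g|^2\le 0$ --- forces $S\equiv 0$, so $g$ is K\"ahler--Einstein and the conclusion is immediate; the argument below does not actually need this reduction.)

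Fix $\varepsilon>0$ and consider the auxiliary function $u_\varepsilon:=S+\varepsilon|z-p|^2$, a nonnegative $C^\infty$ function on $\Omega$. Then $\inf_\Omega u_\varepsilon=0$, and since $u_\varepsilon(z)\ge \varepsilon|z-p|^2$ while $u_\varepsilon(z)\to 0$ along any sequence tending to $p$, any sequence along which $u_\varepsilon\to 0$ must converge to $p$, and then $0\le S\le u_\varepsilon$ forces $S\to 0$ along it as well. Because $|\operatorname{Ric}(g)|\le K$, we may apply the bounded-Ricci form of the Omori--Yau maximum principle (the Remark following its statement) to $-u_\varepsilon$, whose supremum is $0<\infty$: there is a sequence $\{z_k\}=\{z_k^\varepsilon\}\subset\Omega$ with $u_\varepsilon(z_k)\to 0$ (hence $z_k\to p$ and $S(z_k)\to 0$), $|\nabla u_\varepsilon(z_k)|\to 0$, and $\liminf_{k}\Delta u_\varepsilon(z_k)\ge 0$.

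Rewrite \eqref{Scalar S} as $|\operatorname{Ric}(g)+g|^2=S+\langle X,\nabla S\rangle-\Delta S$ and evaluate at $z_k$. Since $\nabla S=\nabla R$, Cauchy--Schwarz together with the hypothesis on $X$ gives $|\langle X,\nabla S\rangle(z)|\le |X|(z)\,|\nabla R|(z)\le d(z,\partial\Omega)$ at every point (trivially where $\nabla R=0$), so $|\langle X,\nabla S\rangle(z_k)|\le d(z_k,\partial\Omega)\le |z_k-p|\to 0$; it is exactly here that the hypothesis on $X$ is used, and the point is that it kills the drift term without bounding $|X|$ or $|\nabla R|$ separately (which is why only the weaker Ricci form of Omori--Yau is needed). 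Next $\Delta S=\Delta u_\varepsilon-\varepsilon\,\Delta(|z-p|^2)$, and $\liminf_k\Delta u_\varepsilon(z_k)\ge 0$ gives, along a subsequence, $-\Delta S(z_k)\le \varepsilon\,\Delta(|z-p|^2)(z_k)+o(1)$. Hence
\[
0\ \le\ |\operatorname{Ric}(g)+g|^2(z_k)\ \le\ S(z_k)+d(z_k,\partial\Omega)+\varepsilon\,\Delta(|z-p|^2)(z_k)+o(1),
\]
so $\limsup_k|\operatorname{Ric}(g)+g|^2(z_k)\le \varepsilon\,\limsup_k \Delta(|z-p|^2)(z_k)$. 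A diagonal argument over $\varepsilon=1/m\to 0$ then produces a single sequence $p_m\to p$ with $|\operatorname{Ric}(g)+g|(p_m)\to 0$.

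The step I expect to be the main obstacle is controlling $\Delta(|z-p|^2)=g^{i\bar j}\partial_i\partial_{\bar j}|z-p|^2=\sum_i g^{i\bar i}$ near $p$ along the sequence --- equivalently, a lower bound $g\gtrsim \sum_i dz^i\,d\overline z^i$ (or just $\sum_i g^{i\bar i}=O(1)$) in a neighbourhood of $p$. Completeness of $g$ forces the component of $g$ normal to $\partial\Omega$ to blow up like $d(\cdot,\partial\Omega)^{-2}$ (modulo logarithms), and in one complex variable the K\"ahler condition then forces the whole metric to blow up, so that $\sum_i g^{i\bar i}\to 0$ at the boundary; establishing the analogous control at a general pseudoconvex boundary point --- or, alternatively, replacing $|z-p|^2$ by a localizing function with an a priori controlled $g$-Laplacian (for instance one built from a bounded strictly plurisubharmonic peak function at $p$, or from the Riemannian distance function of $g$) --- is the technical heart of the argument. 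Everything else is routine bookkeeping of the three terms in \eqref{Scalar S}.
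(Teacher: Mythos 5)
Your proposal follows the same core strategy as the paper --- apply the Omori--Yau principle in its bounded-Ricci form to the scalar-curvature identity \eqref{Scalar S}, use $S\ge 0$ (from the sharp scalar lower bound for expanding solitons) to identify $\inf_\Omega S=0$, and use the hypothesis $|X|\le|\nabla R|^{-1}\,d(\cdot,\partial\Omega)$ to bound the drift term by $d(\cdot,\partial\Omega)\to 0$ --- but you add the penalization $\varepsilon|z-p|^2$, which the paper does not. The paper simply takes an Omori--Yau sequence for $-S$ and asserts it converges to $p$; strictly speaking that is unjustified, since such a sequence only sees $\inf_\Omega S$ and could drift to a different boundary point where $S$ also degenerates. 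So your instinct that something must force $z_k\to p$ is sound, and your localization genuinely achieves it.

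The price, however, is the term $\varepsilon\,\Delta(|z-p|^2)(z_k)=\varepsilon\sum_i g^{i\bar i}(z_k)$, and you have correctly identified this as the real gap. The proposition assumes only $|\operatorname{Ric}(g)|\le K$; this gives no lower bound on $g$ against the Euclidean metric, so $\sum_i g^{i\bar i}$ may a priori blow up as $z_k\to p\in\partial\Omega$, in which case the final inequality $\limsup_k|\operatorname{Ric}(g)+g|^2(z_k)\le\varepsilon\limsup_k\Delta(|z-p|^2)(z_k)$ is vacuous; the diagonal step over $\varepsilon=1/m$ additionally requires this quantity to be bounded uniformly in $\varepsilon$. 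Your suggested repairs (a penalizing function with an a priori controlled $g$-Laplacian, built from the $g$-distance or a bounded plurisubharmonic peak function at $p$) are the right direction, but as written the argument is incomplete at exactly the step you flag. In short: the paper's version avoids this term entirely but leaves the convergence $p_k\to p$ unaddressed, while your version secures the convergence but needs the extra geometric input $\sum_i g^{i\bar i}=O(1)$ near $p$; neither is airtight without one more ingredient.
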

\begin{proof}
    For any $p \in \partial \Omega$ such that $R(p)=-n$, we may choose a sequence $\{p_k\}$ provided by the generalized maximum principle. Then it follows from the equation (\ref{Scalar S}),
    \begin{align*}
        0 \leq \limsup_{k \rightarrow \infty} \Delta S(p_k) = \limsup_{k \rightarrow \infty} \left(\langle \nabla S,X \rangle (p_k) +   S(p_k) - |\operatorname{Ric}(g)+g|^2(p_k)\right) .
    \end{align*}
Since $|X|=O(|\nabla S|^{-1+\epsilon})$, this implies that
   \begin{align*}
      0 &\leq \limsup_{k \rightarrow \infty} \Delta S(p_k) \\
      &\leq \lim_{k \rightarrow \infty} O(|\nabla S|^{\epsilon})(p_k) - \lim_{k \rightarrow \infty} |\operatorname{Ric}(g)+g|^2(p_k) \\
      & \leq -\lim_{k \rightarrow \infty} |\operatorname{Ric}(g)+g|^2(p_k) \\
      &\leq 0.
   \end{align*}
Thus, we have
$$
\lim_{k\rightarrow \infty}|\operatorname{Ric}(g) + g|(p_k) = 0,
$$
which is the conclusion.
\end{proof}

First, we see the following Bochner formula for real holomorphic vector fields.
\begin{lemma} \label{bochner real hol lem}
Assume that $g$ is a complete K\"ahler metric and $X$ is a real holomorphic vector field, then we have
\begin{equation} \label{bochner real hol}
        \Delta |X|^2 = |\nabla X|^2 - \operatorname{Ric}(X,X).
\end{equation}
\end{lemma}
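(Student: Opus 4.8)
The identity (\ref{bochner real hol}) is pointwise in nature, so completeness plays no role and it suffices to verify it in a local holomorphic chart. The plan is the standard Bochner computation. Writing the real vector field in local coordinates as $X = X^i\partial_i + \overline{X^i}\partial_{\bar i}$, each of $|X|^2$, $|\nabla X|^2$ and $\operatorname{Ric}(X,X)$ is the sum of a term built from the components $X^i$ and its complex conjugate, which equals it; hence (\ref{bochner real hol}) is equivalent to the ``holomorphic'' Bochner identity
\begin{equation*}
    \Delta\bigl(g_{k\bar l}X^k\overline{X^l}\bigr) = g^{i\bar j}g_{k\bar l}(\nabla_i X^k)\overline{(\nabla_j X^l)} - R_{k\bar l}X^k\overline{X^l}.
\end{equation*}
At this point I would use that the Levi-Civita connection of a Kähler metric is the Chern connection --- metric compatible, with only Christoffel symbols of type $\Gamma^k_{ij}$ --- so that real-holomorphy of $X$ (that $X^{1,0}$ be holomorphic) is equivalent to the vanishing $\nabla_{\bar j}X^k = 0$ for all $j,k$, and, conjugating, $\nabla_i\overline{X^l} = 0$.

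To prove the displayed identity I would expand $\Delta(g_{k\bar l}X^k\overline{X^l}) = g^{i\bar j}\nabla_i\nabla_{\bar j}(g_{k\bar l}X^k\overline{X^l})$ using $\nabla g = 0$. Since $\nabla_{\bar j}X^k = 0$, the inner derivative collapses to $g_{k\bar l}X^k\,\nabla_{\bar j}\overline{X^l}$; applying $\nabla_i$ and contracting with $g^{i\bar j}$ then leaves only two terms (one product-rule term dies because $\nabla g = 0$): the ``gradient'' term $g^{i\bar j}g_{k\bar l}(\nabla_i X^k)\overline{(\nabla_j X^l)}$, coming from $\nabla_{\bar j}\overline{X^l} = \overline{\nabla_j X^l}$, and the ``curvature'' term $g^{i\bar j}g_{k\bar l}X^k\,\nabla_i\nabla_{\bar j}\overline{X^l}$. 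For the latter, since $\nabla_i\overline{X^l} = 0$ one has $\nabla_i\nabla_{\bar j}\overline{X^l} = [\nabla_i,\nabla_{\bar j}]\overline{X^l}$, and the commutator of covariant derivatives on the anti-holomorphic vector $\overline{X}$ is a single term involving $\operatorname{Rm}$; contracting it against $g^{i\bar j}g_{k\bar l}X^k$ collapses the Riemann tensor to the Ricci tensor and yields $-R_{k\bar l}X^k\overline{X^l}$. Summing the two terms proves the identity, hence (\ref{bochner real hol}).

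I expect the only genuinely delicate point to be the sign and index bookkeeping in $[\nabla_i,\nabla_{\bar j}]\overline{X^l}$: one has to use the curvature conventions fixed by $R_{i\bar j} = -\partial_i\partial_{\bar j}\log\det(g_{k\bar l})$ (equivalently $R_{i\bar j} = g^{k\bar l}R_{i\bar j k\bar l}$), together with the Kähler symmetries of $R_{i\bar j k\bar l}$, to confirm that the Ricci term enters with a minus sign. This is the sign dictated by the Bochner--Kodaira principle and by the existence of holomorphic vector fields on $\mathbb{CP}^n$ (where $\operatorname{Ric}>0$); it is also immediately checked on flat $\mathbb{C}^n$, where for the field with $X^1 = z^1$ both sides equal the positive constant $1$. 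Everything else is a routine expansion.
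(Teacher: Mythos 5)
Your proof is correct and follows essentially the same route as the paper's: expand $\Delta|X|^2$ by the product rule, use real-holomorphy ($\nabla_{\bar j}X^k=0$, equivalently $\nabla_i X_j=0$ after lowering) to kill one term, and commute the two covariant derivatives on the remaining second-order term so that the contracted curvature operator produces $-R_{k\bar l}X^k\overline{X^l}$. The only cosmetic difference is that the paper works with the lowered components $X_k$ and writes $\Delta=g^{i\bar j}\nabla_{\bar j}\nabla_i$, while you keep indices raised and apply $\nabla_i\nabla_{\bar j}$; by metric compatibility these are identical computations.
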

\begin{proof}
We compute this formula in local coordinates. The assumption of $X$ being real holomorphic is equivalent to $\nabla_i X_j = 0$ for any $i,j$, and we use the cancellation notation $\cancel{\nabla_i X_j}$ to indicate when this property is applied. Let $\nabla^i=g^{i\bar{j}}\nabla_{\bar{j}}$, then we have 
    \begin{align*}
     \Delta |X|^2 &=   \nabla^i \nabla_i \left(X^k \cdot X_k \right)\\
     & = \nabla^i \left(\nabla_i X^k\cdot X_k +\cancel{ X^k \cdot \nabla_i X_k}\right) \\
     &=\nabla^i X_k  \cdot \nabla_i X^k  + X_k  \cdot \nabla^i \nabla_i X^k.
    \end{align*}
Denote the first term by $|\nabla X|^2$ and commute $\nabla^i$ with $\nabla_i$ in the second term, we obtain
\begin{align*}
    \Delta |X|^2 &= |\nabla X|^2 + X_k \cdot\left(-R^{~~k}_{l}\cdot X^l + \cancel{\nabla_i \nabla^i X^k}\right) \\
    &= |\nabla X|^2 - R_{l \bar{k}} X^l X^{\bar{k}} \\
    &= |\nabla X|^2 - \operatorname{Ric}(X,X),
\end{align*}
which completes the proof.
\end{proof}
\begin{remark}
On compact K\"ahler manifolds with negative first Chern class, as a direct consequence of the equation (\ref{bochner real hol}), there is no non-trivial holomorphic vector field.
\end{remark}

Assume that $g$ is a complete K\"ahler-Ricci soliton with $C^1$-bounded geometry. Denote $X^{\mathfrak{b}}=(X^{\mathfrak{b}})^{1,0}+(X^{\mathfrak{b}})^{0,1}$ the dual 1-form of the real holomorphic vector field $X$ where $(X^{\mathfrak{b}})^{0,1}:=g(X^{1,0},\cdot)$. If $X^{\mathfrak{b}}$ is closed, then for any complex vector field $Y,Z$, we have
\begin{equation*}
    0=dX^{\mathfrak{b}}(Y,Z)=(\nabla_Y X^{\mathfrak{b}})(Z) - (\nabla_Z X^{\mathfrak{b}})(Y).
\end{equation*}

Next, we show that the real holomorphic vector field $X$ is unique. Otherwise, suppose that $X$ and $\tilde{X}$ are two real holomorphic vector fields for $g$ with the same soliton constant whose dual 1-forms are closed. Setting $Y=X-\tilde{X}$, then for any $1\leq i,j \leq n$, we deduce:
\begin{align*}
  \begin{cases}
    \nabla_j Y_i =0;\\
    \nabla_{\bar{j}} Y_i=0,
\end{cases}  
\end{align*}
where the first equation follows from the real holomorphicity of $Y$, and the second equation follows from the soliton equations of $X$ and $\tilde{X}$, respectively. Applying $g^{k\bar{j}}\nabla_k $ to the second equation, we have
\begin{align*}
    0 = g^{k\bar{j}}\nabla_k \nabla_{\bar{j}} Y_i = g^{k\bar{j}} \left(\nabla_{\bar{j}} \nabla_k  Y_i - R_{k\bar{j}i \bar{l}} Y^{\bar{l}}\right)= - R_{i \bar{l}} Y^{\bar{l}},
\end{align*}
which implies $\operatorname{Ric}(Y,Y)=0$. Since $\operatorname{Ric}(g)$ is negatively pinched outside a compact subset $K\subset \subset \Omega$, it follows that in $\Omega \setminus K$,
\begin{align*}
    0 = \operatorname{Ric}(Y,Y) \leq -C |Y|^2,
\end{align*}
for some constant $C>0$. Hence, $Y=0$ in $\Omega \setminus K$, and by the real analyticity of $Y$, we conclude that $Y=0$ throughout $\Omega$.

We now proceed to the proof of the main theorem.
\begin{proof}[Proof of Theorem \ref{MT1}]
Consider the soliton equation in local coordinates,
\begin{align}\label{soliton eq local}
    R_{i\bar{j}} + \nabla_i X_{\bar{j}} + \nabla_{\bar{j}} X_i = -g_{i\bar{j}}.
\end{align}
Since $X^{\mathfrak{b}}$ is closed, we have
$$
\nabla_i X_{\bar{j}} = \nabla_{\bar{j}} X_i.
$$
Applying $g^{k\bar{j}}\nabla_k$ to both sides of (\ref{soliton eq local}), we obtain
\begin{align*}
    g^{k\bar{j}}\nabla_k R_{i\bar{j}} + 2 g^{k\bar{j}}\nabla_k \nabla_{\bar{j}} X_i = 0.
\end{align*}
By using contracted Bianchi identity and commuting $\nabla_k$ with $\nabla_{\bar{j}}$, we deduce that
\begin{align*}
    0=&g^{k\bar{j}}\nabla_k R_{i\bar{j}} + 2 g^{k\bar{j}}\nabla_k \nabla_{\bar{j}} X_i \\
    =&\nabla_i R + 2g^{k\bar{j}} \left(\cancel{\nabla_{\bar{j}} \nabla_k  X_i} - R_{k\bar{j}i \bar{l}} X^{\bar{l}}\right) , 
\end{align*}
Therefore,
\begin{align*}
    \nabla_i R = 2R_{i \bar{l}} X^{\bar{l}},
\end{align*}
or, in a coordinate-free notation
\begin{equation}\label{eq}
    g(\nabla R,\cdot)= 2\operatorname{Ric}( X,\cdot).
\end{equation}

It is clear that $|X|$ is bounded on the compact subset $K \subset \subset \Omega$. So on $\Omega \setminus K$, we have $-C_1 g \leq \operatorname{Ric} \leq -C_2 g$ for some constant $C_1,C_2 >0$. Then it follows from (\ref{eq}),
\begin{align*}
    g(\nabla R,X) = 2\operatorname{Ric}(X, X) \leq -2 C_2 |X|^2,
\end{align*}
which implies
\begin{align*}
    |X|^2 \leq -\frac{1}{2C_2} g(\nabla R,X) \leq \frac{1}{2C_2} |X| \cdot |\nabla R|.
\end{align*}
Thus, we have 
$$
\sup_{\Omega} |X| \leq \max \left\{\max_{K} |X|, \frac{\sup_{\Omega}|\nabla R|}{2C_2}\right\}  < +\infty,
$$
thanks to $g$ has $C^1$-bounded geometry. 

Substitute $X$ into the soliton equation, we obtain
\begin{equation*}
    \operatorname{Ric}(X,X) + \mathcal{L}_X g(X,X) = - |X|^2. 
\end{equation*}
Observe that $\mathcal{L}_X g(X,X) = \nabla_X |X|^2 $, we then have
$$
\operatorname{Ric}(X,X) = - \nabla_X |X|^2 - |X|^2.
$$
Replacing the Ricci curvature in (\ref{bochner real hol}) with the above expression, we get
\begin{equation*}
    \Delta |X|^2 = |\nabla X|^2 +  \nabla_X |X|^2 + |X|^2 \geq \nabla_X |X|^2 + |X|^2. 
\end{equation*}
Since $\sup_{\Omega} |X|  < +\infty $, by the generalized maximum principle, there exists a sequence $\{z_k\}$ such that
\begin{align*}
    0 \geq \limsup_{k \rightarrow \infty} \Delta_g |X|^2 (z_k) \geq   \lim_{k \rightarrow \infty} \nabla_X |X|^2 (z_k) + \lim_{k \rightarrow \infty}|X|^2(z_k)  = \sup_{\Omega} |X|^2 \geq 0,
\end{align*}
which forces $|X|=0$. Therefore, $g$ is K\"ahler-Einstein.
\end{proof}
\begin{remark}
Note that the assumption that $X^{\mathfrak{b}}$ is closed does not imply that $X$ is the gradient of a potential function. In the case of a gradient K\"ahler-Ricci soliton, one can apply the generalized maximum principle to the PDE
$$
\Delta f - |\nabla f|^2 - f = 0,
$$
for the potential function $f$, to deduce that $f=0$, using the boundedness of $f$.
\end{remark}
\section{Two K\"ahler metrics on strictly pseudoconvex domains}
\subsection{The Bergman metric}
Let $\Omega$ be a bounded strictly pseudoconvex domain in $\mathbb{C}^n$ and let $A^2(\Omega)$ be the space of holomorphic functions in $L^2(\Omega)$. Clearly, $A^2(\Omega)$ is a Hilbert space. The Bergman kernel $K(z)$ on $\Omega$ is a real analytic function given by
\begin{align*}
    K(z)= \sum^\infty_{j=1} |\varphi_j(z) |^2, \qquad \forall z \in \Omega,
\end{align*}
where $\{\varphi_j\}^\infty_{j=1}$ is an orthonormal basis of $A^2(\Omega)$ with respect to the $L^2$ inner product. Since the Bergman kernel is positive and independent of the choice of any orthonormal basis \cite{krantz2001function} on bounded domains, we can define an invariant metric on $\Omega$ called the Bergman metric by
\begin{align*}
    g_B:= g_{i\bar{j}} dz^i d\bar{z}^j \quad \text{with} \quad g_{i\bar{j}}= \partial_i \partial_{\bar{j}} \log K.
\end{align*}

The Bergman metric is a complete, real analytic K\"ahler metric, with its real analyticity inherited from the Bergman kernel. On strictly pseudoconvex domains with $C^\infty$ boundary, the sectional curvature of the Bergman metric is asymptotic to a negative constant, and the metric has $C^\infty$-bounded geometry \cite{greene2011geometry}. Moreover, due to the boundary behavior of the Ricci curvature of $g_B$ \cite[Corollary 2]{krantz1996bergman}, there exists a compact subset $K\subset\subset \Omega$ such that $\operatorname{Ric}(g_B)$ is bounded and strictly negative outside $K$.

Let $G:=\det\left(g_{B}\right)$ denote the determinant of the Bergman metric. The Bergman invariant function $B(z) := G(z)/K(z)$, introduced by Bergman in \cite{bergman1951kernel}, is invariant under any biholomorphic map between two domains. A notable result by Diederich \cite{diederich1970randverhalten} asserts that as one approaches the boundary of a strictly pseudoconvex domain, $B(z)$ tends to $(n+1)^n\pi^n/n!$. Furthermore, it was shown in \cite{fu1997strictly} that if $B(z)$ is constant throughout $\Omega$, then the Bergman metric is K\"ahler-Einstein.
\begin{lemma} \label{lem:BKRS-BKE}
    Let $\Omega$ be a bounded strictly pseudoconvex domain with $C^\infty$-boundary. Then the Bergman metric is a K\"ahler-Ricci soliton if and only if there exists $f \in C^\infty(\Omega)$ unique up to the addition of a pluriharmonic function such that $B=e^f$ on $\Omega$.
\end{lemma}
\begin{proof}
The ``if'' part is straightforward. Assuming $B = e^f$ and applying $\sqrt{-1}\partial\bar{\partial}\log$ to both sides immediately yields the soliton equation.

We now prove the ``only if'' part. Let $\omega_B$ be the K\"ahler form for the Bergman metric. Assume there is a real holomorphic vector field, such that
$$
\operatorname{Ric}(\omega_B) + \mathcal{L}_X \omega_B = -\omega_B.
$$
Since $\mathcal{L}_X \omega_B$ is real and closed $(1,1)$-form, by the local $\partial \bar{\partial}$-lemma (see, e.g., \cite[Propsition 2.8]{moroianu2007lectures}), there is an open subset $U \subset \Omega$ and a real function $f \in C^\infty(U)$ such that $\mathcal{L}_X \omega_B|_U=\sqrt{-1}\partial \bar{\partial} f$. Thus, on $U$ we have
$$
\operatorname{Ric}(\omega_B) + \sqrt{-1}\partial \bar{\partial} f = -\omega_B,
$$
and additionally,
$$
\sqrt{-1}\partial \bar{\partial}\left(\log B - f\right)=0 \quad \text{on} \quad U.
$$
Hence, there exists a pluriharmonic function $h$ such that $B=e^{f+h}:=e^{\tilde{f}}$ on $U$ where $\tilde{f}:=f+h$. Since both $B$ and $\tilde{f}$ are real-analytic, the equality $B=e^{\tilde{f}}$ extends to the whole $\Omega$ uniquely, which completes the proof. 

In fact,
$$
\operatorname{Ric}(\omega_B) + \sqrt{-1}\partial \bar{\partial} \tilde{f} = -\omega_B
$$
is analytic on $U$, which must also hold on $\Omega$. This shows that the Bergman metric is a gradient K\"ahler-Ricci soliton.
\end{proof}
It was conjectured by S.-Y. Cheng \cite{cheng1979openproblems} that if the Bergman metric of a bounded strictly pseudoconvex domain with $C^\infty$-boundary is K\"ahler–Einstein, then the domain is biholomorphic to the ball. In \cite{huang2021bergman}, Huang and Xiao provided an affirmative answer to Cheng's conjecture.

\begin{theorem}[\cite{huang2021bergman}, Theorem 1.1]\label{HuangXiao}
    The Bergman metric of a bounded strictly pseudoconvex domain $\Omega$ with $C^\infty$-boundary is K\"ahler-Einstein if and only if the domain is biholomorphic to the ball.
\end{theorem}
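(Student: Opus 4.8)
The ``if'' direction is classical and independent of pseudoconvexity: since the Bergman metric is a biholomorphic invariant it suffices to treat the unit ball, where $K(z)=\frac{n!}{\pi^n}(1-|z|^2)^{-(n+1)}$, so $g_B$ is a constant multiple of the complex hyperbolic metric and a direct computation gives $\det(g_{i\bar j})=(n+1)^n(1-|z|^2)^{-(n+1)}$, hence $\operatorname{Ric}(g_B)=-g_B$.

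For the converse we may assume $n\geq2$ (the case $n=1$ being classical), and the plan is to follow the circle of ideas of Fu--Wong and Huang--Xiao. \textbf{Step 1: $B$ is constant.} Since $g_{i\bar j}=\partial_i\partial_{\bar j}\log K$ and $\det(g_{i\bar j})=B\cdot K$ with $B$ the Bergman invariant function, $R_{i\bar j}=-\partial_i\partial_{\bar j}\log\det(g_{k\bar l})=-\partial_i\partial_{\bar j}\log B-g_{i\bar j}$, so the K\"ahler--Einstein equation $R_{i\bar j}=\lambda g_{i\bar j}$ reads $-\partial_i\partial_{\bar j}\log B=(\lambda+1)g_{i\bar j}$; tracing gives $\Delta_{g_B}\log B=-(\lambda+1)n$. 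By Diederich's theorem \cite{diederich1970randverhalten}, $B$ extends continuously and positively to $\overline\Omega$ with boundary value $(n+1)^n\pi^n/n!$, so $\log B$ is bounded on $\Omega$; since $g_B$ has bounded geometry, the generalized maximum principle forces $-(\lambda+1)n=0$, i.e.\ $\lambda=-1$, whereupon $\log B$ is pluriharmonic with constant boundary values and therefore constant (cf.\ \cite{fu1997strictly}). \textbf{Step 2: the Bergman kernel is the Cheng--Yau potential.} Constancy of $B$ says precisely that $u:=c\,K^{-1/(n+1)}$, for a suitable $c>0$, solves the Fefferman Monge--Amp\`ere equation $J[u]:=(-1)^n\det\begin{pmatrix}u & u_{\bar k}\\ u_j & u_{j\bar k}\end{pmatrix}=1$ with $u|_{\partial\Omega}=0$ and $u>0$ in $\Omega$; by uniqueness of the Cheng--Yau solution \cite{Cheng1980OnTE}, $u$ is that solution, and $\omega_{g_B}=-(n+1)\sqrt{-1}\partial\bar\partial\log u$ is the complete K\"ahler--Einstein metric. \textbf{Step 3: kill the logarithmic term.} Fefferman's expansion gives $K=\phi(-\rho)^{-(n+1)}+\psi\log(-\rho)$ with $\phi,\psi\in C^\infty(\overline\Omega)$ and $\phi>0$, where $\psi|_{\partial\Omega}$ is, up to a universal constant, the local CR invariant $\mathcal{O}$ of $\partial\Omega$ that also governs the leading logarithmic term in the formal solution of $J[u]=1$. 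Feeding the \emph{exact} identity $u=c\,K^{-1/(n+1)}$ into Graham's recursive construction of that formal solution, together with the extra rigidity coming from $K$ being a genuine reproducing kernel, one shows that $\psi$, equivalently $\mathcal{O}$, vanishes to infinite order along $\partial\Omega$. \textbf{Step 4: conclude.} Infinite-order vanishing of $\mathcal{O}$ forces the Chern--Moser curvature of $\partial\Omega$ to vanish, i.e.\ $\partial\Omega$ is CR-spherical; combining this with the completeness of $g_B$ and a global rigidity argument, for instance that $g_B$ then has constant negative holomorphic sectional curvature so that $(\Omega,g_B)$ is a complete quotient of $\mathbb{B}^n$ which, being a bounded domain in $\mathbb{C}^n$ with compact smooth boundary and $n\geq2$, must be $\mathbb{B}^n$ itself, one obtains $\Omega\cong\mathbb{B}^n$.

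The main obstacle is Step 3: Fefferman's theory yields only the leading-order relation $\psi|_{\partial\Omega}\propto\mathcal{O}$, which is compatible with $\mathcal{O}\neq0$, so upgrading it to infinite-order vanishing requires a delicate order-by-order analysis of the Monge--Amp\`ere expansion in which the reproducing-kernel nature of $K$, not merely Fefferman-type asymptotics, is essential; this is the technical core of \cite{huang2021bergman}. A secondary difficulty lies in Step 4, since \emph{a priori} a bounded strictly pseudoconvex domain with spherical boundary need not be a ball, so that the completeness of $g_B$ and the global topology of $\Omega$, not just the local CR geometry of $\partial\Omega$, have to be used.
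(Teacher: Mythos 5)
This theorem is quoted in the paper from \cite{huang2021bergman} and is not proved there, so the only benchmark is the original argument of Huang and Xiao. Measured against that, your ``if'' direction and Steps 1--2 are correct and essentially complete: the identity $R_{i\bar j}=-\partial_i\partial_{\bar j}\log B-g_{i\bar j}$, the determination $\lambda=-1$ by applying the Omori--Yau principle to the bounded function $\log B$ (boundedness coming from Diederich's boundary limit \cite{diederich1970randverhalten} and bounded geometry of $g_B$), the conclusion that $B$ is constant, and the identification of $c\,K^{-1/(n+1)}$ with the Cheng--Yau solution \cite{Cheng1980OnTE} of Fefferman's equation constitute exactly the Fu--Wong reduction \cite{fu1997strictly} from which Huang--Xiao also start.

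The gap is that Steps 3 and 4 are descriptions of what must be proved rather than proofs, as you yourself acknowledge. No mechanism is supplied for why the exact identity $u=c\,K^{-1/(n+1)}$, compared against the Fefferman expansion of $K$ and the Graham/Lee--Melrose expansion of $u$, forces the obstruction to vanish beyond leading order (the leading-order relation alone is compatible with a non-spherical boundary, as you note), and no argument is given for why a bounded strictly pseudoconvex domain with spherical boundary must be the ball rather than a nontrivial ball quotient --- this is precisely where Huang--Xiao's generalization of Kerner's theorem is needed. Since these two steps are the entire content of \cite{huang2021bergman}, deferring them to that reference means the proposal is an accurate roadmap, not an independent proof. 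A further caution on Step 3: the claim that $\psi|_{\partial\Omega}$ equals a universal constant times the obstruction function $\mathcal{O}$ in every dimension is stated more strongly than what is known about the Bergman log term, and the order-by-order comparison in \cite{huang2021bergman} is organized somewhat differently; if Step 3 is meant to be more than a placeholder it would have to be rewritten to follow their actual expansion argument.
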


Recall that the Bergman metric $\omega_B$ has $C^\infty$-bounded geometry on a strictly pseudoconvex domain with $C^\infty$ boundary (see \cite{kobayashi1959geometry}), and that its Ricci curvature is negative near the boundary (see \cite{klembeck1978kahler}). Therefore, $\omega_B$ satisfies the assumptions of Theorem \ref{MT1}. By combining Theorem \ref{HuangXiao} with Theorem \ref{MT1}, we immediately deduce the following:
\begin{theorem}\label{thm:BKRS-BKE}
    Let $\Omega \subset \mathbb{C}^n$ be a bounded strictly pseudoconvex domain with $C^\infty$-boundary and let $g_B$ be the Bergman metric. If $g_B$ is a K\"ahler-Ricci soliton, then $\Omega$ is biholomorphic to the ball.
\end{theorem}
\subsection{The complete K\"ahler metric given by a defining function}
For the remaining part of this section, we assume that $\Omega$ admits a $C^{k+2}$, $k \geq 5$, defining function $\rho$ such that $\rho =0$, $d \rho \neq 0$ on $\partial \Omega$, $\left(\rho_{i\bar{j}}\right)>0$ on $\overline{\Omega}$ and $\Omega =\{\rho < 0\}$, where $\rho_{i\bar{j}} := \partial_i \partial_{\bar{j}} \rho$. Let $\phi=-\log(-\rho)$, we can define another complete K\"ahler metric on $\Omega$ of $C^{k-2}$-bounded geometry introduced in \cite{Cheng1980OnTE}, that is
\begin{align*}
    g_\rho:= g_{i\bar{j}} dz^i d\overline{z}^j \quad \text{where} \quad g_{i\bar{j}}:= \partial_i \partial_{\bar{j}} \phi.
\end{align*}
A direct computation gives
$$
g_{i \bar{j}}=\frac{\rho_{i \bar{j}}}{-\rho}+\frac{\rho_i \rho_{\bar{j}}}{\rho^2},\qquad g^{i \bar{j}}=(-\rho)\left(\rho^{i \bar{j}}+\frac{\rho^i \rho^{\bar{j}}}{\rho-|d \rho|^2}\right)
$$
and
\begin{align*}
\operatorname{det}\left(g_{i \bar{j}}\right) & =\left(-\frac{1}{\rho}\right)^n \operatorname{det}\left(\rho_{i \bar{j}}-\frac{\rho_i \rho_{\bar{j}}}{\rho}\right) \\
& =\left(\frac{1}{\rho}\right)^{n+1} \operatorname{det}\left(\rho_{i \bar{j}}\right)\left(-\rho+|d \rho|^2\right),  
\end{align*}
where $\rho_i=\partial_i \rho$, $\left(\rho^{i \bar{j}}\right)=\left(\rho_{i \bar{j}}\right)^{-1}$, $\rho^i=\rho^{i \bar{j}} \rho_{\bar{j}}$ and $|d\rho|^2=\rho^{i \bar{j}} \rho_i \rho_{\bar{j}}$. A straightforward consequence from these computations shows $g_\rho$ is complete.

The Ricci curvature $\operatorname{Ric}(g_\rho)$ is given by
\begin{equation*}
    \begin{aligned}
        R_{i \bar{j}}&=-\partial_i \partial_{\bar{j}}\left(\log \left(\operatorname{det}\left(g_{k \bar{l}}\right)\right)\right) \\
    &=-(n+1)g_{i \bar{j}}-\partial_i \partial_{\bar{j}}\log\left[\det\left(\rho_{k \bar{l}}\right)\left(-\rho+|d \rho|^2\right)\right].
    \end{aligned}
\end{equation*}
Note that $\det \left(\rho_{k \bar{l}}\right)\left(-\rho+|d \rho|^2\right)$ is a positive $C^{k-2}$ function defined on $\bar{\Omega}$. It can be computed that 
$$
\partial_i \partial_{\bar{j}}\log\left[\det\left(\rho_{k \bar{l}}\right)\left(-\rho+|d \rho|^2\right)\right]:=\partial_i \partial_{\bar{j}} F
$$
is a tensor whose norm tends to $0$ near the boundary, so the Ricci curvature asymptotically tends to $-(n+1)$. Assume $\omega_\rho$ is a K\"ahler-Ricci soliton endowed with a real holomorphic vector field, it follows from  \cite{ivey1996local}, $g_\rho$ is real-analytic and hence $F$ is real-analytic. Then by the same argument as in the proof of Lemma \ref{lem:BKRS-BKE}, the local potential of $\mathcal{L}_X \omega_\rho$ coincides with $F$ up to the addition of a pluriharmonic function. And we can extend the local potential to a global one and conclude $g_\rho$ is a gradient K\"ahler-Ricci soliton. Therefore, we have the following:
\begin{corollary}\label{cor:def-KRS-KE}
Let $\Omega$ be a bounded strictly pseudoconvex domain in $\mathbb{C}^n$ with $C^k$-boundary for $k\geq 5$ and let $g_\rho$ be the complete K\"ahler metric defined above. Suppose that $g_\rho$ is a K\"ahler-Ricci soliton, then $g_{\rho}$ is the unique K\"ahler-Einstein metric on strictly pseudoconvex domains constructed by Cheng-Yau.
\end{corollary}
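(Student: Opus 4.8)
The plan is to deduce this from Theorem \ref{MT1} together with the uniqueness of the Cheng--Yau K\"ahler--Einstein metric. In outline: verify that $g_\rho$ satisfies every hypothesis of Theorem \ref{MT1}; conclude that $g_\rho$ is K\"ahler--Einstein; read off its Einstein constant from the boundary asymptotics; and then identify $g_\rho$ with the Cheng--Yau metric by uniqueness of complete negatively curved K\"ahler--Einstein metrics on a bounded domain.

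For the hypotheses: $\Omega$ is bounded and strictly pseudoconvex, hence pseudoconvex; $g_\rho$ is complete, from $g^{i\bar j}g_i g_{\bar j}=|d\rho|^2/(|d\rho|^2-\rho)\le 1$ and Demailly's lemma as recalled above; and $g_\rho$ has $C^{k-2}$-bounded geometry with $k\ge 5$, hence $C^1$-bounded geometry. It remains to exhibit a compact $K\subset\subset\Omega$ with $0>-C_1 g_\rho\ge\operatorname{Ric}(g_\rho)\ge -C_2 g_\rho$ on $\Omega\setminus K$. This is where the Ricci formula
\[
R_{i\bar j}=-(n+1)g_{i\bar j}-\partial_i\partial_{\bar j}\log\!\big[\det(\rho_{k\bar l})(-\rho+|d\rho|^2)\big]
\]
enters: since $\det(\rho_{k\bar l})(-\rho+|d\rho|^2)$ is a positive $C^{k-2}$ function on $\overline\Omega$ and the $g_\rho$-norm of the tensor $\partial_i\partial_{\bar j}\log[\det(\rho_{k\bar l})(-\rho+|d\rho|^2)]$ tends to $0$ at $\partial\Omega$ (as observed in the preceding paragraph), for each fixed $\varepsilon\in(0,n+1)$ there is a compact $K\subset\subset\Omega$ such that $-(n+1+\varepsilon)g_\rho\le\operatorname{Ric}(g_\rho)\le -(n+1-\varepsilon)g_\rho$ on $\Omega\setminus K$, which gives the required inequalities with $C_1=n+1-\varepsilon>0$ and $C_2=n+1+\varepsilon$. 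I would spell out the passage from the tensor-norm decay to this eigenvalue pinching, since this is the one genuinely technical point.

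Granting this (and the standing hypothesis of Theorem \ref{MT1} that the $g_\rho$-dual $1$-form of the associated real holomorphic vector field is closed), Theorem \ref{MT1} shows $g_\rho$ is K\"ahler--Einstein, $\operatorname{Ric}(g_\rho)=\lambda g_\rho$; comparing with $\operatorname{Ric}(g_\rho)\to -(n+1)g_\rho$ at the boundary forces $\lambda=-(n+1)$. On the other hand, Cheng--Yau constructed a complete K\"ahler--Einstein metric $g_{CY}$ on $\Omega$ with $\operatorname{Ric}(g_{CY})=-(n+1)g_{CY}$, and the uniqueness part of their theorem (equivalently, Yau's Schwarz lemma applied to the identity map of $\Omega$ in both directions, or the maximum principle applied to the ratio of the two volume forms) shows that any complete K\"ahler--Einstein metric on $\Omega$ with this Einstein constant equals $g_{CY}$. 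Hence $g_\rho=g_{CY}$. I do not expect a serious obstacle here: the content is already contained in Theorem \ref{MT1} and in the Ricci asymptotics of $g_\rho$ recorded above, and the only care needed is in checking the pinching hypothesis and in invoking the uniqueness of Cheng--Yau's metric.
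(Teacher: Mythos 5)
Your proof is correct and follows exactly the route the paper intends: the preceding paragraphs already establish completeness, $C^{k-2}$-bounded geometry, and the asymptotic Ricci pinching $\operatorname{Ric}(g_\rho)\to-(n+1)g_\rho$ near $\partial\Omega$, so Theorem \ref{MT1} applies, and the boundary asymptotics together with the uniqueness of the complete negatively curved K\"ahler--Einstein metric identify $g_\rho$ with the Cheng--Yau metric. You are also right to flag that the closedness of the $g_\rho$-dual $1$-form of the soliton field must be carried as a standing hypothesis, since the corollary's statement omits it even though Theorem \ref{MT1} requires it.
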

\section{The Bergman metric on uniformly squeezing domains}
The uniform squeezing property was introduced by Liu et al. \cite{liu2005canonical} and by Yeung \cite{yeung2009geometry} independently in order to study canonical invariant metrics on complex manifolds.
\begin{definition}
    A complex manifold $M$ of dimension $n$ is called uniformly squeezing if there exists $ 0< r < R$ such that for any $p \in M$, there is a holomorphic map $f_p:M \rightarrow \mathbb{C}^n$ satisfying
    \begin{enumerate}
        \item $f_p(p)=0$;
        \item $f_p: M \rightarrow f_p(M)$ is biholomorphic;
        \item $B_r \subset f_p(M) \subset B_R $, where $B_r$ and $ B_R $ are Euclidean balls in $\mathbb{C}^n$ with radius $r$ and $R$ centred at $0$. 
    \end{enumerate}
\end{definition}
It is known that all bounded homogeneous domains, bounded domains that are covers of compact K\"ahler manifold, and strongly convex domains with $C^2$-boundary satisfy the uniformly squeezing property. It was proved in \cite{yeung2009geometry} that uniformly squeezing domains equipped with the Bergman metric or K\"ahler-Einstein metric have $C^\infty$-bounded geometry and thus, the Ricci curvature is bounded. 

In \cite{deng2012some}, Deng-Guan-Zhang introduced the concept of the squeezing function to study the geometric and analytic properties of uniformly squeezing domains.
\begin{definition}
    Let $\Omega$ be a bounded domain in $\mathbb{C}^n$. For any $z \in \Omega$ and any holomorphic embedding $f:\Omega \rightarrow B_1$ with $f(z)=0$, we set
    $$
s_\Omega(z,f):=\sup\{r;B_r \subset f(\Omega)\}.
    $$
The squeezing function of $\Omega$ is defined by 
\begin{align*}
    s_\Omega(z):=\sup_f \{s_\Omega(z,f)\}.
\end{align*}
\end{definition}

By definition, we see that $0\leq s_\Omega(z) \leq 1 $ and $s_\Omega(z)$ admits a positive lower bound if and only if $\Omega$ is uniformly squeezing. In \cite{zhang2015curvature}, Zhang established the curvature estimate of the Bergman metric in terms of the squeezing function.
\begin{theorem}[\cite{zhang2015curvature}, Theorem 1.1]
Let $\Omega$ be a bounded domain in $\mathbb{C}^n$ and let $s_\Omega$ be the squeezing function. Denoting $H(z,W)$, $\operatorname{Ric}(z,W)$ and $R(z)$ by the holomorphic sectional curvature, the Ricci curvature and the scalar curvature at $z$ in the direction $W$, respectively. Then we have
\begin{equation}
\begin{aligned}
2-2 \frac{n+2}{n+1} s_\Omega^{-4 n}(z) & \leq H(z, W) \leq 2-2 \frac{n+2}{n+1} s_\Omega^{4 n}(z) \\
(n+1)-(n+2) s_\Omega^{-2 n}(z) & \leq \operatorname{Ric}(z, W) \leq(n+1)-(n+2) s_\Omega^{2 n}(z), \\
n(n+1)-n(n+2) s_\Omega^{-2 n}(z) & \leq R(z) \leq n(n+1)-n(n+2) s_\Omega^{2 n}(z) .
\end{aligned}
\end{equation}
\end{theorem}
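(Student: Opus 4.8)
The plan is to exploit the biholomorphic invariance of the Bergman metric and of all its curvature invariants in order to reduce to a model comparison. Fix $z\in\Omega$ and, given $\varepsilon>0$, choose a holomorphic embedding $f:\Omega\to B_1$ with $f(z)=0$ and $B_r\subset f(\Omega)\subset B_1$ for some $r>s_\Omega(z)-\varepsilon$; after a unitary change of coordinates we may also arrange that the prescribed direction $W$ is sent to $\partial_1$ at the origin. Setting $D:=f(\Omega)$, it then suffices to bound the holomorphic sectional curvature $H_D(0,\partial_1)$, the Ricci curvature $\operatorname{Ric}_D(0,\partial_1)$ and the scalar curvature $R_D(0)$ of the Bergman metric of $D$ from above and below in terms of $r$; letting $r\uparrow s_\Omega(z)$ at the very end takes care of the supremum over $f$ in the definition of $s_\Omega$.

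Next I would express these curvature quantities at $0$ through $K_D$ and its diagonal derivatives. Passing to Bergman representative coordinates at $0$ (so that $\partial_j\log K_D(0)=0$) and normalising the metric by a unitary, one has the Lu Qi-Keng type identity
$$
H_D(0,\partial_1)=2-\frac{\|P_{\mathcal{V}}\psi\|^2_{L^2(D)}}{K_D(0)\,g_{1\bar1}(0)^2},
$$
where $\psi\in A^2(D)$ is the Riesz representative of the functional $h\mapsto\partial_1^2 h(0)$ and $P_{\mathcal{V}}$ is the orthogonal projection onto $\mathcal{V}=\{h\in A^2(D):h(0)=\partial_1 h(0)=0\}$; the defect is nonnegative by Cauchy–Schwarz, which is precisely $H\le 2$, and it can equivalently be written as a ratio of Bergman minimal integrals $m_j(D)=\inf\{\|h\|^2_{L^2(D)}: h \text{ has prescribed } j\text{-jet at }0\}$, $j=0,1,2$. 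For the Ricci and scalar curvature I would instead use $\operatorname{Ric}_D=-g_D-\partial\bar\partial\log B_D$, where $B_D=\det(g_D)/K_D$ is the Bergman invariant function, so that the deviation of $\operatorname{Ric}_D$ from $-g_D$ and of $R_D$ from $-n$ is controlled by the complex Hessian of $\log B_D$ at $0$ (which vanishes on the ball, where $B_D\equiv(n+1)^n\pi^n/n!$).

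The core step is the comparison. Because $B_r\subset D\subset B_1$, restriction of functions yields norm-non-increasing inclusions $A^2(B_1)\hookrightarrow A^2(D)\hookrightarrow A^2(B_r)$, and hence all of the extremal quantities above — $K_D(0)$, the metric at $0$, the higher minimal integrals $m_j(D)$, and the values and derivatives of $B_D$ near $0$ — are pinched between the corresponding quantities for $B_r$ and for $B_1$. The latter are computed explicitly from $K_{B_\rho}(z,\bar w)=\frac{n!}{\pi^n\rho^{2n}}(1-z\cdot\bar w/\rho^2)^{-(n+1)}$: one gets $K_{B_\rho}(0)\propto\rho^{-2n}$, $g^{B_\rho}(0)=\frac{n+1}{\rho^2}\operatorname{Id}$, $m_j(B_\rho)=\rho^{2n+2j}m_j(B_1)$ and $B_{B_\rho}\equiv(n+1)^n\pi^n/n!$. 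Substituting these into the formulas of the previous step, the ratios of the extreme ($\rho=r$ versus $\rho=1$) ball values produce the factors $s_\Omega^{\pm 2n}(z)$ for $\operatorname{Ric}$ and $R$ — reflecting the volume/kernel scaling $K_{B_\rho}\propto\rho^{-2n}$ — and $s_\Omega^{\pm 4n}(z)$ for $H$, with the constants $2(n+2)/(n+1)$, $n+2$ and $n(n+2)$ being exactly the ball defects.

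The hard part will be the bookkeeping, because curvature is not a monotone function of the domain: it is a ratio (respectively a difference) of quantities that are individually monotone, so the one-sided bounds coming from the nested inclusions have to be combined in the right order, and — crucially — to land on the sharp exponents rather than weaker ones one must use the coupling between the minimal integrals $m_0,m_1,m_2$ (made transparent by the normalisation $g(0)=\operatorname{Id}$) instead of sandwiching numerator and denominator independently. A secondary technical nuisance is that Bergman representative coordinates need not be globally biholomorphic, so the identities above should be read with the appropriate local interpretation, or replaced by their coordinate-free formulation in terms of Bergman projections. Once the bounds are obtained for a near-optimal $f$, letting $r\uparrow s_\Omega(z)$ finishes the proof.
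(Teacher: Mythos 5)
This theorem is quoted from Zhang [Zha15, Theorem~1.1]; the paper itself gives no proof, so your attempt can only be measured against the standard argument via Bergman minimal integrals, which is indeed the skeleton you chose. The reduction by biholomorphic invariance to a domain $D=f(\Omega)$ with $B_r\subset D\subset B_1$, the Fuks-type representation $H(0,W)=2-\tfrac{m_1^2}{m_0m_2}$, the monotonicity of the minimal integrals under restriction, and the scaling $m_j(B_\rho)=\rho^{2n+2j}m_j(B_1)$ are all correct and are the right ingredients.

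The gap is exactly at the step you defer to ``bookkeeping.'' Sandwiching $m_0,m_1,m_2$ independently between their values on $B_r$ and $B_1$ gives, for the upper bound of $\tfrac{m_1^2}{m_0m_2}$, the quantity $\tfrac{m_1(B_1)^2}{m_0(B_r)m_2(B_r)}=2\tfrac{n+2}{n+1}\,r^{-(4n+4)}$, i.e.\ the exponent $4n+4$, not $4n$; since $r\le 1$ this is strictly \emph{weaker} than the stated inequality, so the naive sandwich does not prove the theorem. Your third paragraph asserts that ``the ratios of the extreme ball values produce the factors $s_\Omega^{\pm 4n}$,'' which contradicts this computation, and your fourth paragraph concedes that a ``coupling'' between $m_0,m_1,m_2$ is needed without saying what it is. That coupling is the actual content of Zhang's proof, and it is missing here. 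A second, independent problem is your treatment of $\operatorname{Ric}$ and $R$ via $\operatorname{Ric}_D=-g_D-\partial\bar\partial\log B_D$: the inclusions $B_r\subset D\subset B_1$ pinch the \emph{values} of $K_D$, $g_D$ and hence $B_D$ pointwise (and only up to ratios of the monotone quantities --- the Bergman metric itself is not monotone under inclusion), but pointwise bounds on $B_D$ near $0$ give no control whatsoever on the complex Hessian $\partial\bar\partial\log B_D(0)$, and $\log B_D$ has no sign or plurisubharmonicity to invoke. One needs an extremal-problem (minimal-integral) representation of the Ricci curvature itself, parallel to the Fuks formula for $H$, and then the same sharp-exponent issue recurs there. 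As written, the proposal establishes at best the inequalities with exponents $4n+4$ and $2n+2$ in place of $4n$ and $2n$, which is not the stated theorem.
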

Thanks to the curvature estimate, we see that if the squeezing function is close to $1$ outside a compact subset $K \subset\subset \Omega$, then the Ricci curvature is strictly negative on $\Omega \setminus K$. We can conclude the following:
\begin{corollary}
Let $\Omega$ be a bounded domain in $\mathbb{C}^n$ and let $s_\Omega$ be the squeezing function. Suppose there exists $0<\varepsilon < 1$ such that 
\begin{align*}
\left(\frac{n+1+\varepsilon}{n+2}\right)^{\frac{1}{2n}} \leq s_\Omega \leq 1,
\end{align*}
in $\Omega \setminus K$ for some compact subset $K \subset \subset \Omega$. If $g_B$ is a K\"ahler-Ricci soliton, then $g_B$ is K\"ahler-Einstein.
\end{corollary}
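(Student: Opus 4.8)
The plan is to derive this directly from Theorem \ref{MT1}; the only real input beyond that theorem is Zhang's curvature estimate \cite{zhang2015curvature} together with elementary facts about uniformly squeezing domains, and the rest is bookkeeping. The first step is to observe that the hypothesis forces $\Omega$ to be uniformly squeezing. The squeezing function $s_\Omega$ is continuous and strictly positive on $\Omega$ \cite{deng2012some}, hence bounded below by a positive constant on the compact set $K$; on $\Omega\setminus K$ it is bounded below by $\big(\tfrac{n+1+\varepsilon}{n+2}\big)^{1/(2n)}>0$ by assumption. Therefore $\inf_\Omega s_\Omega>0$, so $\Omega$ is uniformly squeezing; in particular $\Omega$ is a bounded pseudoconvex domain, and by Yeung's theorem \cite{yeung2009geometry} the Bergman metric $g_B$ is complete and of $C^\infty$-bounded geometry, hence in particular of $C^1$-bounded geometry.

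Next I would feed the pinching on $s_\Omega$ into the curvature estimate of Zhang on $\Omega\setminus K$. From the upper Ricci bound and $s_\Omega^{2n}\ge\tfrac{n+1+\varepsilon}{n+2}$ one gets, for every tangent direction $W$,
\[
\operatorname{Ric}(z,W)\le (n+1)-(n+2)\,s_\Omega^{2n}(z)\le (n+1)-(n+1+\varepsilon)=-\varepsilon<0,
\]
i.e. $\operatorname{Ric}(g_B)\le -\varepsilon\,g_B$ on $\Omega\setminus K$. From the lower Ricci bound, using $s_\Omega^{-2n}\le\tfrac{n+2}{n+1+\varepsilon}$ (again by the hypothesis on $s_\Omega$),
\[
\operatorname{Ric}(z,W)\ge (n+1)-(n+2)\,s_\Omega^{-2n}(z)\ge (n+1)-\frac{(n+2)^2}{n+1+\varepsilon}=:-C_2 ,
\]
where $C_2=\dfrac{2n+3-\varepsilon(n+1)}{n+1+\varepsilon}$; since $0<\varepsilon<1$ the numerator exceeds $n+2$, so $C_2>0$, and a one-line computation gives $C_2>\varepsilon$. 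Hence, with $C_1:=\varepsilon$, we have $0>-C_1\,g_B\ge\operatorname{Ric}(g_B)\ge -C_2\,g_B$ on $\Omega\setminus K$ and $C_2\ge C_1>0$.

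At this point every hypothesis of Theorem \ref{MT1} is verified: $\Omega$ is a bounded pseudoconvex domain, $g_B$ is a complete K\"ahler metric of $C^1$-bounded geometry, $g_B$ is a K\"ahler-Ricci soliton (associated with a real holomorphic vector field whose dual $1$-form is closed, as required there), and $\operatorname{Ric}(g_B)$ is negatively pinched outside the compact set $K$; Theorem \ref{MT1} then yields immediately that $g_B$ is K\"ahler-Einstein. I do not expect any genuine obstacle in this argument: its content is purely the verification of hypotheses. The one point deserving attention is that the threshold $\big(\tfrac{n+1+\varepsilon}{n+2}\big)^{1/(2n)}$ is calibrated precisely so that Zhang's upper Ricci bound collapses to the strictly negative constant $-\varepsilon$ — which is exactly what supplies the condition ``$0>-C_1 g$'' of Theorem \ref{MT1} — while the structural prerequisites for invoking that theorem (pseudoconvexity of $\Omega$, completeness of $g_B$, and $C^1$-bounded geometry of $g_B$) are all standard consequences of uniform squeezing.
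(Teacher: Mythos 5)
Your proof is correct and follows essentially the same route the paper intends: Zhang's curvature estimate converts the hypothesis on $s_\Omega$ into the Ricci pinching $0>-\varepsilon\, g_B\geq \operatorname{Ric}(g_B)\geq -C_2\, g_B$ on $\Omega\setminus K$, uniform squeezing (forced by the positive lower bound on $s_\Omega$) supplies pseudoconvexity, completeness and $C^\infty$-bounded geometry of $g_B$ via Yeung's theorem, and Theorem \ref{MT1} finishes. The only caveat — inherited from the paper's own statement rather than introduced by you — is that Theorem \ref{MT1} additionally requires the dual $1$-form of the soliton vector field to be closed, a hypothesis the corollary omits and which you rightly note must be assumed when invoking that theorem.
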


It was proven in \cite{deng2016properties} that if $\Omega$ is a bounded strictly pseudoconvex domain with $C^2$-boundary, then $\lim_{z \rightarrow \partial \Omega} s_\Omega(z)=1$. Conversely, this statement is not always true, and we refer the interested reader to see \cite{fornaess2018non} for a counterexample. In \cite{zimmer2018gap}, Zimmer proved that for a bounded convex domain $\Omega$ with $C^\infty$-boundary, if $s_\Omega(z)\geq1-\varepsilon$ outside a compact subset $K\subset \subset \Omega$, then $\Omega$ is strictly pseudoconvex.
\section{Example of model domains}
\subsection{Thullen domain in $\mathbb{C}^2$}
The Thullen domain 
$$
\Omega_m:=\{(z,w)\in \mathbb{C}^2;|z|^2+|w|^{2m}<1\}
$$ 
is a bounded pseudoconvex domain in $\mathbb{C}^2$ for any $m \geq 1$ such that all boundary points are strictly pseudoconvex except on the boundary part when $\{|z|=1\}$. 

If $m=1$, the Thullen domain becomes the ball. For $m>1$, we observe that $-\log(1-|z|^2-|w|^{2m})$ is not strictly plurisubharmonic and therefore $g_{i\bar{j}}=-\partial_i \partial_{\bar{j}} \log(1-|z|^2-|w|^{2m})$ is not a K\"ahler metric. Instead, we consider another defining function $\rho(z,w)=(1-|z|^2)^{1/m}-|w|^2$, and the corresponding complete K\"ahler metric given by $\rho$ introduced in \cite{seo2012theorem} is 
$$
g_m(z,w)=-\partial_i \partial_{\bar{j}} \log \rho(z,w).
$$
The specific expression of $g_m$ is
\begin{align*}
    \frac{( 1-| z|^{2}) ^{\frac{1}{m}-2}}{m\rho ^{2}}\begin{pmatrix} \rho +\frac{1}{m} |z| ^{2} |w| ^{2} & w\overline{z}( 1-| z|^{2}) \\ \overline{w}z( 1-| z| ^{2}) & m( 1-| z| ^{2}) ^{2} \end{pmatrix},
\end{align*}
and the determinant of $g_m$ is
\begin{align*}
    G:=\det g_m = \frac{\frac{1}{m}\left( 1-\left| z\right|^{2}\right) ^{\frac{2}{m}-2}}{\rho^3}.
\end{align*}
Then the Ricci curvature $\operatorname{Ric}(g_m)=-\partial_i \partial_{\bar{j}}\log G$ is computed by
\begin{align*}
    \operatorname{Ric}(g_m) = - \frac{3( 1-| z|^{2}) ^{\frac{1}{m}-2}}{m\rho ^{2}}\begin{pmatrix} \frac{2}{3}(m-1)\rho^2 (1-|z|^2)^{-\frac{1}{m}} +\rho+\frac{1}{m} |z| ^{2} |w| ^{2} & w\overline{z}( 1-| z|^{2}) \\ \overline{w}z( 1-| z| ^{2}) & m( 1-| z| ^{2}) ^{2} \end{pmatrix}.
\end{align*}

It is clear that $g_m$ is K\"ahler-Einstein if and only if $m=1$. By a straightforward computation, we can see the following.
\begin{proposition} \label{Thu gm soliton}
    Suppose $g_m$ is a K\"ahler-Ricci soliton, then $m=1$.
\end{proposition}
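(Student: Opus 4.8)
The plan is to exploit the $T^{2}$-symmetry of $\Omega_{m}$ to reduce an arbitrary K\"ahler--Ricci soliton vector field to an explicit two-parameter family, and then to read off the contradiction from the resulting $i\partial\overline{\partial}$-equation. One should resist trying to invoke Theorem \ref{MT1} directly: although $\operatorname{Ric}(g_{m})$ turns out to be negatively pinched, the defining function $\rho=(1-|z|^{2})^{1/m}-|w|^{2}$ is not $C^{1}$ up to the weakly pseudoconvex circle $\{|z|=1,\ w=0\}$, so $C^{1}$-bounded geometry for $g_{m}$ is not readily available; the symmetry argument sidesteps this entirely.

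First I would record, directly from the displayed expressions together with $\log\det g_{m}=(\tfrac{2}{m}-2)\log(1-|z|^{2})-3\log\rho+\mathrm{const}$ and $\omega_{g_{m}}=i\partial\overline{\partial}(-\log\rho)$, that, as $(1,1)$-forms,
\[
\operatorname{Ric}(g_{m})=-3\,\omega_{g_{m}}+\frac{2(m-1)}{m}\,i\partial\overline{\partial}\log(1-|z|^{2}),
\]
so $\operatorname{Ric}(g_{m})+3\,\omega_{g_{m}}$ vanishes identically precisely when $m=1$. Next, since $g_{1\bar{1}}$ and $g_{2\bar{2}}$ depend only on $|z|^{2},|w|^{2}$ and $g_{1\bar{2}}\,dz\,d\overline{w}$ is invariant, the torus $(z,w)\mapsto(e^{i\theta}z,e^{i\varphi}w)$ acts on $\Omega_{m}$ by isometries of $g_{m}$. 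If $g_{m}$ were a K\"ahler--Ricci soliton with vector field $X$, then averaging $X$ over $T^{2}$ produces a $T^{2}$-invariant real holomorphic vector field satisfying the same soliton equation, because that equation is affine in $X$ with $T^{2}$-invariant right-hand side; a Taylor expansion (using the equivariance forced by invariance) shows a $T^{2}$-invariant holomorphic vector field on $\Omega_{m}$ is of the form $az\partial_{z}+bw\partial_{w}$ with $a,b\in\mathbb{C}$, and the imaginary parts of $a,b$ generate rotations, which are Killing. Hence we may assume $X=\alpha E_{z}+\beta E_{w}$ modulo a Killing field, with $\alpha,\beta\in\mathbb{R}$, $E_{z}=z\partial_{z}+\overline{z}\partial_{\overline{z}}$, $E_{w}=w\partial_{w}+\overline{w}\partial_{\overline{w}}$. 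As $E_{z},E_{w}$ are real holomorphic, $\mathcal{L}_{E_{z}}\omega_{g_{m}}=i\partial\overline{\partial}(E_{z}\phi)$ and $\mathcal{L}_{E_{w}}\omega_{g_{m}}=i\partial\overline{\partial}(E_{w}\phi)$ for $\phi=-\log\rho$, and a one-line computation gives $E_{z}\phi=\tfrac{2|z|^{2}(1-|z|^{2})^{1/m-1}}{m\rho}$ and $E_{w}\phi=\tfrac{2|w|^{2}}{\rho}$.

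Substituting into $\operatorname{Ric}(g_{m})+\mathcal{L}_{X}\omega_{g_{m}}=\lambda\,\omega_{g_{m}}$, everything collapses to $i\partial\overline{\partial}H\equiv 0$ on $\Omega_{m}$, where
\[
H=\frac{2(m-1)}{m}\log(1-|z|^{2})+\frac{2\alpha|z|^{2}(1-|z|^{2})^{1/m-1}}{m\rho}+\frac{2\beta|w|^{2}}{\rho}+(\lambda+3)\log\rho
\]
is smooth on $\Omega_{m}$ and depends only on $s=|z|^{2}$ and $r=|w|^{2}$. The next step is elementary: writing $H=h(s,r)$, pluriharmonicity is equivalent to $\partial_{r}(r\,\partial_{r}h)=0$, $\partial_{s}(s\,\partial_{s}h)=0$, $\partial_{s}\partial_{r}h=0$, and smoothness of $h$ across $\{w=0\}$ and $\{z=0\}$ forces $h$ to be constant. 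Finally I would differentiate the identity $H\equiv c_{0}$ in $r$, with $s$ fixed and $\rho=(1-s)^{1/m}-r$: after clearing $\rho^{2}$ the identity is affine in $r$, so its $r$-coefficient $\lambda+3$ must vanish, giving $\lambda=-3$; the remaining identity, evaluated at $s=0$ and then at any $s\in(0,1)$, forces $\beta=0$ and then $\alpha=0$; and then $H\equiv c_{0}$ reads $\tfrac{2(m-1)}{m}\log(1-s)\equiv c_{0}$, which holds only if $m=1$.

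The main obstacle is conceptual rather than computational: the key is recognizing that one should not try to verify the bounded-geometry hypothesis of Theorem \ref{MT1} for $g_{m}$, but instead average the soliton field over $T^{2}$ so that it becomes an explicit combination of $E_{z}$ and $E_{w}$; once this is done the soliton equation turns into a single scalar identity that visibly fails for $m>1$, and every intermediate step is a short explicit calculation. (Consistently, $g_{1}$ is the ball metric, which is K\"ahler--Einstein and hence a soliton, so the conclusion $m=1$ is sharp.)
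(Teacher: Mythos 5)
Your proof is correct, and it takes a genuinely different route from the paper's. The paper keeps the soliton field completely general: it writes $R_{i\bar j}+\nabla_i X_{\bar j}+\nabla_{\bar j}X_i=\lambda g_{i\bar j}$ componentwise using the explicit matrices for $g_m$ and $\operatorname{Ric}(g_m)$, applies $\partial_w^2\partial_{\overline w}^2$ to the $(1,\bar 1)$- and $(2,\bar 2)$-equations, evaluates at $w=0$, and combines the two resulting relations to force $4m(m-1)=0$. You instead average $X$ over the $T^2$-action (legitimate, since $T^2$ acts by biholomorphic isometries of $g_m$ and $X\mapsto\mathcal L_Xg$ is linear), reduce to $X=\alpha E_z+\beta E_w$ modulo Killing fields, and collapse the soliton equation to $i\partial\overline{\partial}H=0$ for a single explicit radial function $H$; constancy of a pluriharmonic function of $(|z|^2,|w|^2)$ then yields $\lambda=-3$, $\alpha=\beta=0$, and finally $m=1$. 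I checked the key identities: your decomposition $\operatorname{Ric}(g_m)=-3\,\omega_{g_m}+\tfrac{2(m-1)}{m}i\partial\overline{\partial}\log(1-|z|^2)$ matches the paper's matrix formula for $\operatorname{Ric}(g_m)$, the formula $\mathcal L_X\omega=i\partial\overline{\partial}(X\phi)$ is the standard one for real holomorphic fields, and the "radial pluriharmonic $\Rightarrow$ constant" step is the same maximum-principle device the paper uses later when comparing $g_B$ with $\lambda g_m$. What your route buys is more information and less computation: you identify $\lambda=-3$ and show the soliton field must be Killing (so the soliton is trivial, not merely that $m=1$), whereas the paper's brute-force derivation needs no group-theoretic input but obscures why the cancellation occurs. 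Your remark about not invoking Theorem \ref{MT1} is defensible; note, though, that the paper itself asserts in a remark that the proposition also follows from Theorem \ref{MT1} since $\operatorname{Ric}(g_m)$ is negatively pinched — your argument has the advantage of being self-contained and needing neither bounded geometry nor the closedness hypothesis on $X^{\mathfrak{b}}$.
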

\begin{proof}
Assume $g_m$ is a K\"ahler-Ricci soliton associated with a real holomorphic vector field $X=X^i\partial_i+\overline{X}^i\partial_{\overline{i}}$ where $X^i$ is a holomorphic function, that is 
\begin{align}\label{Thu soliton eq}
    R_{i\Bar{j}} + \nabla_i X_{\bar{j}}+\nabla_{\bar{j}}X_i = \lambda g_{i \Bar{j}},
\end{align}
for some constant $\lambda$ and $i,j=1$ or $2$. We then compute
\begin{align*}
    \nabla_i X_{\bar{j}}+\nabla_{\bar{j}}X_i = X^k \partial_k g_{i \Bar{j}} + \overline{X}^{l} \partial_{\bar{l}} g_{i \Bar{j}} + \partial_i X^k g_{k\bar{j}}+ \partial_{\bar{j}}\overline{X}^{l} g_{i\bar{l}} .
\end{align*}
First, fixing $i=j=1$. Denote $\partial_1 = \partial_z$, $\partial_2 = \partial_w$, we have
\begin{align*}
X^k \partial_k g_{1\bar 1}
&= X^1 \partial_1 g_{1\bar 1} + X^2 \partial_2 g_{1\bar 1} \\
&= \frac{X^1 \bar z}{m^3 \rho^3 (1-|z|^2)^{3-\frac1m}}
\Bigg[
   2m^2 (1-|z|^2)^{\frac2m}
   + \bigl((1-m)|z|^2 + 2m^2 - 2m\bigr)|w|^4 \\
&\hspace{3em}
   + (1-|z|^2)^{\frac1m}
     \bigl((m+1)|z|^2 - 4m^2 + 2m\bigr)|w|^2
\Bigg] \\
&\quad
+ \frac{X^2 \bar w}{m^2 \rho^3 (1-|z|^2)^{2-\frac1m}}
\Bigg[
   (|z|^2-m)|w|^2
   + (1-|z|^2)^{\frac1m}(|z|^2+m)
\Bigg].
\end{align*}
and
\begin{align*}
   \partial_i X^k g_{k\bar{j}}=& \partial_1 X^1 g_{1\bar{1}}+ \partial_1 X^2 g_{2\bar{1}} \\
   =& \frac{\rho +\frac{1}{m} |z| ^{2} |w| ^{2}}{m\rho ^{2}( 1-| z|^{2})^{2-\frac{1}{m}}}\cdot \partial_1 X^1 + \frac{z\overline{w}}{m\rho^2( 1-| z|^{2})^{1-\frac{1}{m}}} \cdot \partial_1 X^2.
\end{align*}
By combining the above computations and rearranging all terms in (\ref{Thu soliton eq}), we obtain
\begin{equation}\label{Thu soliton 2}
\begin{aligned}
&2(m-1)\rho^2(1-|z|^2)^{-\frac1m}
+(3+\lambda)\left(\rho+\frac1m |z|^2|w|^2\right) \\
&= \frac{X^1 \bar z+\bar X^1 z}{m^2(1-|z|^2)\rho}
\Bigg[
   2m^2(1-|z|^2)^{\frac2m}
   + \bigl((1-m)|z|^2+2m^2-2m\bigr)|w|^4 \\
&\hspace{3em}
   + (1-|z|^2)^{\frac1m}
     \bigl((m+1)|z|^2-4m^2+2m\bigr)|w|^2
\Bigg] \\
&\quad
+ \frac{X^2 \bar w+\bar X^2 w}{m\rho}
\Bigg[
   (|z|^2-m)|w|^2
   + (1-|z|^2)^{\frac1m}(|z|^2+m)
\Bigg] \\
&\quad
+ \left(\rho+\frac1m |z|^2|w|^2\right)
  \bigl(\partial_1 X^1+\partial_{\bar1}\bar X^1\bigr) + (1-|z|^2)
  \bigl(\partial_1 X^2\,\bar w z+\partial_{\bar1}\bar X^2\,w\bar z\bigr).
\end{aligned}
\end{equation}
Then, we take partial derivatives with respect to $w$ and $\overline{w}$ twice $\left(\text{that is, applying } \partial^2_2 \partial^2_{\bar{2}}\right)$ to both sides of (\ref{Thu soliton 2}). By evaluating at $(z,0)$, we get
\begin{equation}\label{11}
    \frac{4|z|^2}{m(1-|z|^2)}\left(X^1 \overline{z}+\overline{X}^1 z\right) + |z|^2\left(\partial_2 X^2 +\partial_{\Bar{2}}\overline{X}^2 \right)=4m(m-1).
\end{equation}

Next, fixing $i=j=2$. By taking the same approach as in the above computations, we have
\begin{equation}\label{22}
    4\left(X^1 \overline{z}+\overline{X}^1 z\right) + m(1-|z|^2)\left(\partial_2 X^2 +\partial_{\Bar{2}}\overline{X}^2 \right)=0,
\end{equation}
which implies
\begin{equation}\label{33}
\partial_2 X^2 +\partial_{\Bar{2}}\overline{X}^2 = -\frac{4}{m(1-|z|^2)}\left(X^1 \overline{z}+\overline{X}^1 z\right).
\end{equation}

If $\partial_2 X^2 +\partial_{\Bar{2}}\overline{X}^2 \neq 0$, then plugging (\ref{33}) into (\ref{11}), we obtain
$$
4m(m-1) = 0,
$$
which yields $m=1$, since $m\geq 1$. 

If $\partial_2 X^2 +\partial_{\Bar{2}}\overline{X}^2=0$, from (\ref{22}), we have $X^1 \overline{z}+\overline{X}^1 z=0$. Then, plugging into (\ref{11}), we still have $m=1$.
\end{proof}

\begin{remark}
By the expression of $\operatorname{Ric}(g_m)$, we can see that $\operatorname{Ric}(g_m)$ is negatively pinched in $\Omega_m$. Thus, Proposition \ref{Thu gm soliton} can be deduced from Theorem \ref{MT1} directly.
\end{remark}

Recall that a domain $\Omega \subset \mathbb{C}^n$ is called complete Reinhardt if $(\lambda_1z_1,...,\lambda_nz_n)\in\Omega$ for $(z_1,...,z_n)\in\Omega$ and $|\lambda_i|\leq1$, $1\leq i\leq n$. It is clear that the Thullen domain is a complete Reinhardt domain.
An interesting fact between the metric $g_m$ and the Bergman metric $g_B$ on the Thullen domain is that they only coincide when $\Omega_m$ is the ball, that is $m=1$.
\begin{proposition}\label{prop:Thullen-propotion-m1}
    Let $g_B=\lambda g_m$~~for some $\lambda >0$, then $m=1$.
\end{proposition}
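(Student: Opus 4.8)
The plan is to reduce the tensor identity $g_B=\lambda g_m$ to a single scalar equation by comparing the $w\bar w$-components of the two metrics, and then to show that this scalar equation cannot hold throughout $\Omega_m$ unless $m=1$. I keep the notation $\rho=(1-|z|^2)^{1/m}-|w|^2$ from the text and set $A:=(m+1)(1-|z|^2)^{1/m}-(m-1)|w|^2$, so that $K=\tfrac{m\pi^2}{(m+1)^2}\,A\,\rho^{-3}(1-|z|^2)^{1/m-2}$ and $A>0$ on $\Omega_m$ (since $|w|^2<(1-|z|^2)^{1/m}$ there).

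First I would record the $(2,\bar 2)$-entry of $g_m$: reading it off the explicit matrix, or equivalently computing $-\partial_w\partial_{\bar w}\log\rho$ with the help of $\rho+|w|^2=(1-|z|^2)^{1/m}$, gives $(g_m)_{2\bar 2}=(1-|z|^2)^{1/m}/\rho^2$. Next I would compute $(g_B)_{2\bar 2}=\partial_w\partial_{\bar w}\log K$. Since $\log K$ differs from $\log A-3\log\rho$ by a term not involving $w$, only those two pieces contribute; using $\partial_w A=-(m-1)\bar w$, $\partial_w\rho=-\bar w$ together with the simplifications $A+(m-1)|w|^2=(m+1)(1-|z|^2)^{1/m}$ and $\rho+|w|^2=(1-|z|^2)^{1/m}$, a direct calculation should give
\begin{equation*}
(g_B)_{2\bar 2}=(1-|z|^2)^{1/m}\left(\frac{3}{\rho^2}-\frac{m^2-1}{A^2}\right).
\end{equation*}
Imposing $(g_B)_{2\bar 2}=\lambda (g_m)_{2\bar 2}$ and cancelling the common factor $(1-|z|^2)^{1/m}$ then yields the pointwise identity
\begin{equation*}
\frac{3-\lambda}{\rho^2}=\frac{m^2-1}{A^2}\qquad\text{on }\Omega_m .
\end{equation*}

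Finally, suppose $m>1$. Then $m^2-1>0$, hence $3-\lambda>0$, and the identity forces $A^2/\rho^2$ (equivalently $A/\rho$, since both are positive) to be constant on $\Omega_m$. Restricting to the slice $\{z=0\}$, where $\rho=1-|w|^2$ and $A=(m+1)-(m-1)|w|^2$, one sees that $A/\rho=\bigl((m+1)-(m-1)|w|^2\bigr)/(1-|w|^2)$ is non-constant in $|w|^2\in[0,1)$: its derivative in $|w|^2$ equals $2/(1-|w|^2)^2\neq 0$. This contradiction forces $m=1$, in which case $m^2-1=0$ and the identity merely pins down $\lambda=3$, consistent with $g_B$ being $(n+1)=3$ times $g_1$ on the ball. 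The only step that requires genuine care is the second logarithmic derivative of $\log A$ in the computation of $(g_B)_{2\bar 2}$; the identity $A+(m-1)|w|^2=(m+1)(1-|z|^2)^{1/m}$ is precisely what collapses its numerator to $m^2-1$, and everything else is elementary.
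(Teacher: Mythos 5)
Your proof is correct; I checked the key computations. With $A=(m+1)(1-|z|^2)^{1/m}-(m-1)|w|^2$ one indeed has $(g_m)_{2\bar2}=(\rho+|w|^2)/\rho^2=(1-|z|^2)^{1/m}/\rho^2$ (matching the explicit matrix in the text), and $\partial_w\partial_{\bar w}\log A=-(m-1)\bigl(A+(m-1)|w|^2\bigr)/A^2=-(m^2-1)(1-|z|^2)^{1/m}/A^2$, which gives exactly your formula for $(g_B)_{2\bar2}$ and hence the scalar identity $(3-\lambda)/\rho^2=(m^2-1)/A^2$; the slice computation $\frac{d}{ds}\bigl(\tfrac{(m+1)-(m-1)s}{1-s}\bigr)=2/(1-s)^2$ also checks out. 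Your route is genuinely different from the paper's. The paper works at the level of potentials: from $g_B=\lambda g_m$ it deduces that $\log K+\lambda\log\rho$ is pluriharmonic, uses the rotational symmetry and the maximum principle to conclude it is constant, evaluates at the origin, and then extracts $\lambda=3$ and $m=1$ from the boundary behaviour of the resulting identity as $|z|^2+|w|^{2m}\to1$. You instead compare a single component of the two metrics, which turns the problem into an elementary one-variable calculus statement entirely in the interior of $\Omega_m$; this avoids both the pluriharmonicity/maximum-principle step and the somewhat delicate boundary-limit analysis (e.g.\ the claim that the numerator does not always vanish while the denominator does). A small bonus of your argument is that it only uses the $(2,\bar2)$-entry of the identity $g_B=\lambda g_m$, so it actually proves a slightly stronger statement; the trade-off is that you must compute a second logarithmic derivative of the Bergman kernel, whereas the paper never differentiates $K$ beyond invoking the definition of $g_B$. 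Both arguments recover $\lambda=3$ in the end.
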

\begin{proof}
Assume $g_B=\lambda g_m$ for some $\lambda >0$, then for some pluriharmonic functions $h$, we have
\begin{align} \label{bs}
    \log K + \lambda \log \rho =   h.
\end{align}
Since the left-hand side of (\ref{bs}) only depends on $|z|$ and $|w|$, so does the right-hand side. Moreover, $\Omega_m$ is a complete Reinhardt domain, and $h=\log K + \lambda \log \rho$ is invariant under the torus action. It follows that $h$ identically equals a constant $C$. Evaluating at the origin, we get $C=\log\left(\frac{m\pi^2}{m+1}\right)$.

Thus, we have
\begin{align} \label{2}
   \frac{(m+1)\left(1-|z|^2\right)^{\frac{1}{m}}-(m-1)|w|^2}{(m+1)\rho^{3-\lambda}\left(1-|z|^2\right)^{2-\frac{1}{m}}}=1,
\end{align}
which implies that the left-hand side of (\ref{2}) is independent of the value of $z$ and $w$. When we approach the boundary, namely $|z|^2+|w|^{2m} \rightarrow 1$, we note that the numerator of (\ref{2}) does not always vanish, whereas the denominator always vanishes if $\lambda \neq 3$. That forces $\lambda=3$. Let $(z,w) \rightarrow (0,1) \in \partial \Omega_m$, we can verify that $m=1$.
\end{proof}
The Bergman kernel of the Thullen domain $\Omega_m$ is given by
\begin{align*}
    K(z, w)=\frac{m\pi^2}{m+1}\cdot\frac{(m+1)\left(1-|z|^2\right)^{\frac{1}{m}}-(m-1)|w|^2}{(m+1)\rho^3\left(1-|z|^2\right)^{2-\frac{1}{m}}}.
\end{align*}
As a significant example, the following computation verifies the recent work of Savale-Xiao \cite{savale2023k} regarding Yau’s question on bounded pseudoconvex domains with $C^\infty$-boundary of finite type in dimension two.
\begin{proposition}\label{prop:BKE-m1}
    The Bergman metric $g_B$ is K\"ahler-Einstein if and only if $m=1$.
\end{proposition}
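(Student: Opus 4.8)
The plan has two parts. The ``if'' direction is immediate: when $m=1$ the domain $\Omega_1$ is the unit ball and its Bergman metric is the complex hyperbolic metric, which is K\"ahler-Einstein. For the converse, suppose $g_B=g_{i\bar j}\,dz^i d\overline z^j$ with $g_{i\bar j}=\partial_i\partial_{\bar j}\log K$ is K\"ahler-Einstein, $\operatorname{Ric}(g_B)=\lambda g_B$, and set $G:=\det(g_{i\bar j})$. Then $-\partial_i\partial_{\bar j}\log G=\lambda\,\partial_i\partial_{\bar j}\log K$, so $\log G+\lambda\log K$ is pluriharmonic on $\Omega_m$. Since $\Omega_m$ is a complete Reinhardt domain, $G$ and $K$ are invariant under the torus action and hence depend only on $|z|$ and $|w|$; so does $\log G+\lambda\log K$, and exactly by the argument in the proof of the previous proposition (a radial harmonic function on a disc is constant, and $\Omega_m$ is connected) we conclude $G\,K^{\lambda}\equiv c$ on $\Omega_m$ for some constant $c>0$.

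Next I would read this identity off along the slice $\{z=0\}\cap\Omega_m=\{|w|<1\}$, on which $G(0,w)=g_{1\bar1}(0,w)\,g_{2\bar2}(0,w)$ since the mixed component $g_{1\bar2}$ vanishes there (being a multiple of $\bar z$). From the explicit Bergman kernel one computes $g_{1\bar1}(0,w)$, $g_{2\bar2}(0,w)$ and $K(0,w)$ as explicit functions of $|w|^2$, and finds that as $|w|\to1$ (approaching a strictly pseudoconvex boundary point) both $G(0,w)$ and $K(0,w)$ blow up like $(1-|w|^2)^{-3}$; finiteness and positivity of $c$ therefore force $\lambda=-1$. With $\lambda=-1$ the constant $c$ equals the Bergman invariant function $B=G/K$ along the slice, so $B(0,\cdot)$ is constant in $w$, and equating its value at $w=0$ with its limit as $|w|\to1$ yields an algebraic identity in $m$ which, after simplification, reads $(m-1)^2=0$. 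Hence $m=1$. (Equivalently: by Fu's theorem \cite{fu1997strictly} together with the reduction above, $g_B$ is K\"ahler-Einstein on $\Omega_m$ if and only if $B$ is constant, and the computation shows this fails for $m>1$.)

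The step I expect to be the main obstacle is this explicit computation of $g_B$ from $K$ and of the two asymptotic values of $B$ along $\{z=0\}$: it is elementary but demands careful bookkeeping of the fractional powers $(1-|z|^2)^{1/m}$, and it pays to stay on a slice (or to work with the $2$-jet of $B$ at the origin) rather than expanding the full Ricci tensor. As an independent check one may instead invoke the Savale-Xiao solution \cite{savale2023k} of Yau's question for finite-type bounded pseudoconvex domains with $C^\infty$ boundary in $\mathbb{C}^2$, which applies to $\Omega_m$ and forces it to be biholomorphic to the ball, hence $m=1$; the computation above provides a self-contained verification.
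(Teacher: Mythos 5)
Your argument is correct, but it follows a genuinely different route from the paper's. The paper works directly with the Azukawa--Suzuki formulas \cite{azukawa1983bergman} for $g_B$ and $\operatorname{Ric}(g_B)$ along the slice $\{z=0\}$, imposes $\operatorname{Ric}(0,w)=-g(0,w)$, and reads off from the $(1,\bar 1)$-component the polynomial identity $r^2[rt^2-(1+r)t+1]=0$, whence $r=0$ and $m=1$. You instead run the Fu--Wong argument \cite{fu1997strictly}: pluriharmonicity of $\log G+\lambda\log K$ plus torus-invariance gives $GK^\lambda\equiv c$, the boundary blow-up rates $G,K\sim(1-|w|^2)^{-3}$ pin down $\lambda=-1$ (a point the paper takes for granted), and then constancy of the Bergman invariant $B=G/K$ together with Diederich's limit $9\pi^2/2$ \cite{diederich1970randverhalten} yields the equation in $m$. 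I checked the endgame: with the standard normalization $K(0,0)=\tfrac{m+1}{m\pi^2}$ one gets $g_{1\bar1}(0,0)=\tfrac{2m+1}{m}$, $g_{2\bar2}(0,0)=\tfrac{2(m+2)}{m+1}$, hence $B(0,0)=\tfrac{2\pi^2(2m+1)(m+2)}{(m+1)^2}$, and equating this with $\tfrac{9\pi^2}{2}$ gives exactly $(m-1)^2=0$ as you predicted. The trade-off between the two proofs: yours needs only the $2$-jet of $\log K$ at the origin rather than the $4$-jet along a slice, and it explains where $\lambda=-1$ comes from; on the other hand it is sensitive to the normalization of the Bergman kernel (note the prefactor $\tfrac{m\pi^2}{m+1}$ displayed in the paper is the reciprocal of the correct $\tfrac{m+1}{m\pi^2}$, and using the displayed constant would spoil the comparison with Diederich's limit), whereas the paper's computation is invariant under rescaling $K$ by a constant. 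Your fallback via Savale--Xiao \cite{savale2023k} is also valid but proves the much stronger statement that $\Omega_m$ is biholomorphic to the ball, which is more than is needed here.
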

\begin{proof}
The ``if'' part follows from the case of the ball. For the ``only if'' part, we use the same notations as in \cite{azukawa1983bergman}. It is enough to show this at $(0,w)$ with $|w|<1$. Let
    $$
t=\frac{1-|w|^2}{1-r|w|^2}, \qquad |w|<1,
    $$
where $r=(m-1)/(m+1)$. We note that $0\leq r <1$ and $0<t\leq 1$. Then the Bergman metric is given by
$$
g(0,w)=\begin{pmatrix}
    \alpha /(1+r) t & 0 \\
    0 & \beta(1-r t)^2 /(1-r)^2 t^2
\end{pmatrix}
$$
and the Ricci curvature of the Bergman metric is given by
$$
\operatorname{Ric}(0,w)=\begin{pmatrix}
    \frac{3\alpha^2\beta-4A\beta-2B\alpha}{\alpha \beta (1+r)t} & 0 \\
    0 & \frac{(1-rt)^2\left(3\alpha \beta^2 -4C \alpha -2B\beta\right)}{\alpha \beta (1-r)^2 t^2}
\end{pmatrix},
$$
where
\begin{equation}
\begin{cases}\label{th}
\alpha=3+r t^2, \quad \beta=3-r t^2 , \\
A=6+4 r t^2+(1+r) r t^3, \\
B=2\left(9+3 r t^2-3(1+r) r t^3+2 r^2 t^4\right) /\left(3+r t^2\right), \\
C=3\left(6-6 r t^2+(1+r) r t^3\right) /\left(3-r t^2\right) .
\end{cases} 
\end{equation}

Assume that the Bergman metric is K\"ahler-Einstein, that is
$$
\operatorname{Ric}(0,w) =- g(0,w).
$$
Consider the $(1,\Bar{1})$-component of the Ricci curvature, after rearrangement we obtain
\begin{align}\label{th be}
    4\alpha^2\beta -4A\beta -2B\alpha =0.
\end{align}
Plugging (\ref{th}) into (\ref{th be}), we get
$$
r^2 [rt^2-(1+r)t+1]=0.
$$
Since $0\leq r <1$, we have $rt^2-(1+r)t+1 > 0$. This implies $r=0$.
\end{proof}
It was proved in \cite{gontard2019curvatures} that the holomorphic bisectional curvature of $g_B$ is negatively pinched in $\Omega_m$. This implies that $\operatorname{Ric}(g_B)$ is also negatively pinched. Thus, we can characterize the Thullen domain if $g_m$ or $g_B$ is a K\"ahler-Ricci soliton.
\begin{theorem}
    Let $\Omega_m$ be the bounded Thullen domain in $\mathbb{C}^2$. Then the following statements are equivalent.
    \begin{enumerate}
        \item $g_m$ (or $g_B$) is K\"ahler-Einstein;
        \item $g_m$ (or $g_B$) is a K\"ahler-Ricci soliton;
        \item there is a constant $\lambda >0$ such that $g_m = \lambda g_B$;
        \item $m=1$ and $\Omega_m$ is the unit ball.
    \end{enumerate}
\end{theorem}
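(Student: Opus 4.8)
The proof closes a cycle of implications among $(1)$--$(4)$, and several of the links are already at hand. The two earlier results characterizing when $g_m$ and when $g_B$ is K\"ahler-Einstein give the equivalence $(1)\Leftrightarrow(4)$; the earlier proposition asserting that $g_B=\lambda g_m$ forces $m=1$ gives $(3)\Rightarrow(4)$; and $(1)\Rightarrow(2)$ is immediate, since a K\"ahler-Einstein metric is a K\"ahler-Ricci soliton with $X\equiv 0$. So it remains only to prove $(4)\Rightarrow(3)$ and $(2)\Rightarrow(4)$.

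For $(4)\Rightarrow(3)$ I would set $m=1$, so that $\Omega_1$ is the unit ball in $\mathbb{C}^2$ and $\rho=1-|z|^2-|w|^2$. The Bergman kernel of the ball is a constant multiple of $\rho^{-3}$, so taking $\partial\bar\partial\log$ gives $g_B=3g_1$, that is $g_m=\tfrac13 g_B$, which is statement $(3)$ with $\lambda=\tfrac13$. The same computation exhibits $g_1=-\partial\bar\partial\log\rho$ as the K\"ahler-Einstein metric of the ball, re-proving $(4)\Rightarrow(1)$; this step is pure bookkeeping.

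The real content is $(2)\Rightarrow(4)$, which splits into two cases. If $g_m$ is a K\"ahler-Ricci soliton, then $m=1$ by Proposition~\ref{Thu gm soliton}. If $g_B$ is a K\"ahler-Ricci soliton, I would invoke Theorem~\ref{MT1}: the domain $\Omega_m$ is bounded pseudoconvex, $g_B$ is complete K\"ahler, and by \cite{gontard2019curvatures} the holomorphic bisectional curvature of $g_B$---hence $\operatorname{Ric}(g_B)$---is negatively pinched on all of $\Omega_m$, so the pinching hypothesis holds on $\Omega_m\setminus K$ for any fixed compact $K\subset\subset\Omega_m$; granting that $g_B$ has $C^1$-bounded geometry on $\Omega_m$, Theorem~\ref{MT1} forces $g_B$ to be K\"ahler-Einstein, whence $m=1$ by the earlier proposition that $g_B$ is K\"ahler-Einstein only when $m=1$. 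A self-contained alternative, which bypasses Theorem~\ref{MT1}, is to copy the argument of Proposition~\ref{Thu gm soliton}: substitute the explicit Bergman kernel of $\Omega_m$ into the soliton equation, apply $\partial^2_2\partial^2_{\bar{2}}$ and evaluate along $\{w=0\}$ (and symmetrically along $\{z=0\}$), and compare coefficients to force $m=1$.

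The step I expect to be the main obstacle is verifying the hypotheses of Theorem~\ref{MT1} for $g_B$ on $\Omega_m$. Since $\Omega_m$ is pseudoconvex but not strictly pseudoconvex---its weakly pseudoconvex locus being the circle $\{|z|=1,\ w=0\}$---the $C^1$-bounded geometry of $g_B$ is not supplied by the strictly pseudoconvex theory cited in Section~4 and would have to be read off from the explicit expression for $g_B$ (or imported from finite-type results for domains in $\mathbb{C}^2$); one also has to accommodate the closed-dual-$1$-form requirement built into Theorem~\ref{MT1}. For that reason the direct-computation route, though longer, is the more robust one, and is likely what I would ultimately carry out.
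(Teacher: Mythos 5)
Your proposal reconstructs exactly the paper's intended argument: the paper gives no explicit proof of this theorem but assembles it precisely as you do --- Proposition~\ref{Thu gm soliton} and the two K\"ahler--Einstein characterizations handle the $g_m$ statements and $(1)\Leftrightarrow(4)$, the proportionality proposition gives $(3)\Leftrightarrow(4)$, and the soliton statement for $g_B$ is reduced to Theorem~\ref{MT1} via Gontard's negative pinching of $\operatorname{Ric}(g_B)$ on $\Omega_m$. The obstacle you flag is genuine but is equally unaddressed in the paper itself: for $m>1$ the domain is only weakly pseudoconvex along $\{|z|=1,\,w=0\}$, so neither the $C^1$-bounded geometry of $g_B$ nor the closed-dual-$1$-form hypothesis of Theorem~\ref{MT1} is supplied by the strictly pseudoconvex theory cited in Section~4, and your direct-computation fallback (mimicking Proposition~\ref{Thu gm soliton} with the explicit Bergman kernel) is the more defensible route for $(2)\Rightarrow(4)$ in the $g_B$ case.
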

\begin{proof}
Since $g_B$ is the Bergman metric and $g_m$ is a metric given by a $C^\infty$-defining function, we can apply Theorem \ref{thm:BKRS-BKE} and Corollary \ref{cor:def-KRS-KE} to conclude that $(1) \iff (2)$. Moreover, the equivalence $(3) \iff (4)$ follows directly from Proposition \ref{prop:Thullen-propotion-m1}. Finally, the equivalence of $(1)\iff(4)$ is established by Proposition \ref{prop:BKE-m1}, together with the explicit expression of $\omega_m$ and $\operatorname{Ric}(\omega_m)$.
\end{proof}

\subsection{Strictly pseudoconvex Hartogs domains}
Let $C \in (0,+\infty)$ and let $F: [0, B)\rightarrow (0, +\infty)$ be a continuous decreasing function that is smooth on $(0,B)$ satisfying
$$
\frac{d}{dx}\left(\frac{xF^\prime(x)}{F(x)}\right)<0.
$$
The bounded Hartogs domain $D_F \subset \mathbb{C}^n$ associated with the function $F$ is a strictly pseudoconvex domain given by
$$
D_F=\left\{\left.\left(z_0, z_1, \ldots, z_{n-1}\right) \in \mathbb{C}^n;| z_0\right|^2<x_0,\left|z_1\right|^2+\cdots+\left|z_{n-1}\right|^2<F\left(\left|z_0\right|^2\right)\right\} .
$$
There is a corresponding complete K\"ahler metric defined by
\begin{align*}
    g_F= - \partial_i \partial_{\bar{j}} \log \left(F\left(\left|z_0\right|^2\right) - |z_1|^2 - \cdots - |z_{n-1}|^2\right) .
\end{align*}

For a comprehensive discussion on the geometric properties of the Hartogs domain, we refer interested readers to see Loi-Zuddas\cite{loizuddas1}, and Di Scala-Loi-Zuddas\cite{di2009riemannian} for details. Since $\Omega_F$ is strictly pseudoconvex with $C^\infty$-boundary, the metric on $\Omega_F$ given by the defining function must have $C^\infty$-bounded geometry, and the Ricci curvature is negative near the boundary. Consequently, we have the following:
\begin{corollary}[Theorem 1.2, \cite{loizuddas1}]
Let $D_F \subset \mathbb{C}^n$ be a bounded strictly pseudoconvex Hartogs domain equipped with a complete K\"ahler metric $g_f$ defined above. Suppose $g_F$ is a K\"ahler-Ricci soliton. Then $g_F$ is K\"ahler-Einstein. Moreover, $F(x)=C_1 - C_2 x$ for some $C_1,C_2 >0$, which implies that $D_F$ is holomorphically isomorphic to an open subset of the complex hyperbolic space $\mathbb{H}^n$ via the map
    \begin{align*}
        \varphi : D_F \rightarrow \mathbb{H}^n, \quad \left(z_0, z_1, \ldots, z_{n-1}\right) \mapsto \left(\frac{z_0}{\sqrt{c_1 / c_2}}, \frac{z_1}{\sqrt{c_1}}, \ldots, \frac{z_{n-1}}{\sqrt{c_1}}\right).
    \end{align*}
\end{corollary}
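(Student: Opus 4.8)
The plan is to prove the two assertions in turn: first that a soliton $g_F$ must be Kähler–Einstein, and then that the Einstein condition forces $F$ to be affine, after which the stated isomorphism is a direct check. Throughout I write $x=|z_0|^2$, $s=|z_1|^2+\cdots+|z_{n-1}|^2$ and $u=F(x)-s$, so that $g_F=\partial_i\partial_{\bar j}(-\log u)\,dz^i d\bar z^j$. The first step is to use the large symmetry group of $D_F$: the metric $g_F$ is invariant under the compact group $K_0$ generated by the torus $T^n$ of rotations in each coordinate and the unitary group $U(n-1)$ of the fibre $(z_1,\dots,z_{n-1})$. Averaging the soliton field $X$ over $K_0$, and using that the soliton equation $\operatorname{Ric}(g_F)+\mathcal{L}_X g_F=\lambda g_F$ is affine-linear in $X$ for fixed $\lambda$, I obtain a $K_0$-invariant real holomorphic soliton field; invariance forces it into the diagonal form $X^{1,0}=a z_0\partial_0+b\sum_{j\geq1}z_j\partial_j$. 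Only the real parts of $a,b$ contribute to $\mathcal{L}_X g_F$, since the imaginary parts generate rotations and hence isometries, so I may take $a,b\in\mathbb{R}$.

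Next I would assemble the two sides of the soliton equation as $\partial\bar\partial$ of explicit functions. Because $X$ is real holomorphic its flow is by biholomorphisms, so $\mathcal{L}_X$ commutes with $\partial\bar\partial$ and $\mathcal{L}_X g_F=\partial\bar\partial\big(\mathcal{L}_X(-\log u)\big)$ with $\mathcal{L}_X(-\log u)=-2\big(axF'(x)-bs\big)/u$. On the curvature side, a direct determinant computation gives $\det(g_{i\bar j})=u^{-(n+1)}\Psi(x)$ with $\Psi(x)=xF'(x)^2-xF(x)F''(x)-F(x)F'(x)$, whence $\operatorname{Ric}(g_F)=-(n+1)g_F-\partial\bar\partial\log\Psi$. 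Substituting into the soliton equation shows that $-2(axF'-bs)/u+(\lambda+n+1)\log u-\log\Psi$ is pluriharmonic, and being a function of $(x,s)$ alone it is constant. Differentiating this identity in $s$ and clearing $u^2$ yields a polynomial identity in $s$: its $s$-coefficient forces $\lambda=-(n+1)$, and the remaining term forces $bF(x)=axF'(x)$. Since $F$ is positive, strictly decreasing and smooth up to $x=0$, this relation admits no solution unless $a=b=0$ (a nonzero solution would make $F$ a nonconstant power of $x$, incompatible with $F(0)\in(0,\infty)$ and strict pseudoconvexity). Hence the dilation part of $X$ vanishes, $X$ is Killing, $\mathcal{L}_X g_F=0$, and $g_F$ is Kähler–Einstein with $\operatorname{Ric}(g_F)=-(n+1)g_F$. (Alternatively, once $\operatorname{Ric}(g_F)$ is known to be negatively pinched near $\partial D_F$, the Einstein conclusion also follows from Theorem \ref{MT1}, provided one checks that $X^{\mathfrak b}$ is closed, which the diagonal reduction again makes explicit.)

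For the second assertion, the Einstein equation $\operatorname{Ric}(g_F)=-(n+1)g_F$ is equivalent to $\partial\bar\partial\log\Psi=0$, i.e. to $\Psi(x)$ being constant. Setting $Q=\log F$, the equation $\Psi\equiv K$ becomes $(xQ')'=-Ke^{-2Q}$; with $t=\log x$ and $Q=\tfrac{t}{2}+\psi$ this reduces to the autonomous Liouville equation $\ddot\psi=-Ke^{-2\psi}$, whose first integral is $\dot\psi^2=Ke^{-2\psi}+2c_1$. Requiring $F=\sqrt{x}\,e^{\psi}$ to extend smoothly with $F(0)\in(0,\infty)$ forces $\dot\psi\to-\tfrac12$ as $t\to-\infty$, hence $c_1=\tfrac18$; the resulting solution is unique up to the scaling $F(x)\mapsto\sqrt{\mu}\,F(x/\mu)$ and is precisely the affine family $F(x)=C_1-C_2x$ with $C_1,C_2>0$. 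With $F$ affine one has $u=C_1-C_2|z_0|^2-\sum_{j\geq1}|z_j|^2$, and the stated map $\varphi$ sends $D_F$ biholomorphically onto $\{\sum_{k=0}^{n-1}|\zeta_k|^2<1\}$, pulling back $-\log(1-|\zeta|^2)$ to $-\log u$ up to an additive constant; thus $\varphi$ is a holomorphic isometry onto an open subset of $\mathbb{H}^n$.

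I expect the main obstacle to be the rigidity step in the second assertion: extracting from $\Psi\equiv\mathrm{const}$ that $F$ is affine requires solving the Liouville-type ODE and, crucially, using the interior regularity of $F$ at $z_0=0$ to discard the one-parameter family of non-affine solutions. The functional-equation argument forcing $\lambda=-(n+1)$ and the triviality of $X$ is clean, but some care is needed to justify rigorously both the averaging reduction and the passage from pluriharmonicity to constancy.
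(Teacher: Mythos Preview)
Your argument is correct and genuinely different from the paper's. The paper does not write out a proof: it simply records that Loi--Zuddas computed $\operatorname{Ric}(g_F)$ to be negatively pinched near $\partial D_F$ and then invokes Theorem~\ref{MT1} for the first assertion, deferring the ``moreover'' clause entirely to \cite{loizuddas1}. Your route is self-contained: you average $X$ over the isometry group $T^n\times U(n-1)$ to force the diagonal form $X^{1,0}=az_0\partial_0+b\sum_{j\ge 1}z_j\partial_j$, compute both $\operatorname{Ric}(g_F)$ and $\mathcal{L}_X g_F$ as $\partial\bar\partial$ of explicit functions of $(x,s)$, and read off $\lambda=-(n+1)$ and $bF=axF'$ from the resulting pluriharmonicity, which the boundary behaviour of $F$ at $x=0$ then forces to be trivial. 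For the second assertion you solve the Einstein ODE $\Psi\equiv K$ explicitly via the Liouville substitution and show the regularity condition $F(0)\in(0,\infty)$ singles out the affine solutions.

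What each approach buys: the paper's one-line appeal to Theorem~\ref{MT1} illustrates that the Hartogs case is an instance of the general machinery, but it silently inherits the hypothesis that $X^{\mathfrak b}$ be closed (and requires checking $C^1$-bounded geometry), neither of which is stated in the corollary. Your symmetry reduction sidesteps both issues: the averaging step works for an arbitrary real holomorphic soliton field, so you actually prove the corollary as written, without the closed-form assumption. Your treatment of the second assertion is also more informative than a bare citation, since it makes transparent why affine $F$ is forced --- the asymptotic $\dot\psi\to-\tfrac12$ at $t\to-\infty$ fixes the first integral and the separable ODE then integrates to $F(x)=C_1-C_2x$ exactly. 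The only point worth tightening is the passage ``pluriharmonic and a function of $(x,s)$ implies constant'': this is true, but you should say explicitly that the mixed $\partial_j\partial_{\bar k}$ terms for $j\neq k\ge 1$ kill $h_{ss}$, the diagonal ones then kill $h_s$, and finally $xh_{xx}+h_x=0$ together with smoothness at $z_0=0$ kills $h_x$.
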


\bibliographystyle{amsalpha}
\bibliography{wpref}
\end{document}